\newtheorem{teo}{Theorem}
\newtheorem{stat}{Statement}[section]
\newtheorem{lem}{Lemma}[section]
\renewenvironment{abstract}{\textbf{Аbstract.} }{\medskip}
\begin{document} 

\begin{center}{\bf On homeomorphisms of three-dimensional manifolds with pseudo-Anosov attractors and repellers}
 
Grines V.Z., Pochinka O.V., Chilina E.E.
\end{center}

\begin{abstract}
The present paper is devoted to a study of orientation-preserving homeomorphisms on three-dimensional manifolds with a non-wandering set consisting of a finite number of surface attractors and repellers. The main results of the paper relate to a class of homeomorphisms for which the restriction of the map to a connected component of the non-wandering set is topologically conjugate to an orientation-preserving pseudo-Anosov homeomorphism. The ambient $\Omega$-conjugacy of a homeomorphism from the class with a locally  direct product of a pseudo-Anosov homeomorphism and a rough transformation of the circle is proved. In addition, we prove that the centralizer of a pseudo-Anosov homeomorphisms consists of only  pseudo-Anosov and periodic maps.
\end{abstract}
{\bf Keywords:} pseudo-Anosov homeomorphism, two-dimensional attractor.


\section{Introduction} 
In \cite{GrinesLevchenkoMedvedevPochinka,GrinesLevchenkoMedvedevPochinkaNonlin} the dynamics of three-dimensional $A$-diffeomorphisms was studied under the assumption that their non-wandering set consists of surface two-dimensional basic sets. It is proved that diffeomorphisms of this class are ambiently $\Omega$-conjugate to locally direct products of an Anosov diffeomorphism of a two-dimensional torus and a rough transformation of a circle. 
This work is a generalization of these results to a wider class  $\mathcal G$ of maps, which we define as follows. 

The set $\mathcal G$ consists of orientation-preserving homeomorphisms $f$ of a closed orientable topological 3-manifold $M^3$ with the non-wandering set $NW(f)$ consisting of a finite number of connected components $B_0,\dots,B_{m -1}$  satisfying for any $i\in\{0,\dots,m-1\}$ the following conditions:
   \begin{enumerate}
   \item $B_i$ is a cylindrical\footnote{A subspace $X$ of a topological space $Y$ is called {\it a cylindrical embedding into $Y$ of a topological space $\bar X$} if there is a homeomorphism onto the image $h:\bar X\times[-1,1]\to Y$ such that $X=h(\bar X\times\{0\})$.} embedding of a closed orientable surface of genus greater than $1$;

   \item there is a natural number $k_i$ such that $f^{k_i}( B_i)= B_i$, $f^{\tilde k_i}( B_i)\neq B_i$ for any natural number $\tilde k_i< k_i$ and the restriction of the map $f^{k_i}|_{ B_i}$ is topologically conjugate to an orientation-preserving pseudo-Anosov homeomorphism;

   \item $B_i$ is either an attractor\footnote{An invariant set $B$ of a homeomorphism $f$ is called an {\it attractor} if there is a closed neighborhood $U$ of the set $B$ such that $f(U)\subset int\ ; U$, $\underset{j\geq 0}{\bigcap}f^j(U)=B$. The attractor for the homeomorphism $f^{-1}$ is called the {\it repeller} of the homeomorphism $f$.}  or a repeller for the homeomorphism $f^{k_i}$.
   
  \end{enumerate}

  The simplest representatives of the class $\mathcal G$ are homeomorphisms of the set $\Phi$ which are constructed as follows.

Represent the circle as a subset of the complex plane $\mathbb S^1=\{e^{i2\pi\theta}|0\leq \theta <1\}$ and define a covering $p\colon \mathbb R \to\mathbb S^1$ so that $p(r)=s$, where $s=e^{i2\pi r}$.

Consider sets of numbers $n,k,l$ such that $n,k\in\mathbb N$, $l\in\mathbb Z$, where $l=0$ if $k=1$, and $l \in\{1,\dots,k-1\}$   is coprime to $k$ if $k>1$. For each set $n,k,l$ we define a diffeomorphism $\bar\varphi_{n,k,l}:\mathbb R\to\mathbb R$ by the formula \begin{equation*} \bar\varphi_{n,k, l}(r)=r+\frac{1}{4\pi nk} sin(2\pi nkr)+\frac{l}{k}. \end{equation*}

Since $\bar\varphi_{n,k,l}(r)+1=\bar\varphi_{n,k,l}(r+1)$, it follows that the diffeomorphism $\bar\varphi_{n,k, l}$ is the lift of the circle map $\varphi_{n,k,l}(s)=p(\bar\varphi_{n,k,l}(p^{-1}(s)))$, where $ p^{-1}(s)$ is the preimage of the point $s\in\mathbb S^1$ (see Statement \ref{cyclic}).

Denote by $S_g$  a closed orientable surface of genus $g>1$ and by $Z(P)$ the centralizer $Z(P)=\{J\colon S_g\to S_g|\,PJ=JP\}$ of a homeomorphism $P\colon S_g\to S_g$.

  Let us denote by $\mathcal P$  the set of all pseudo-Anosov homeomorphisms on the surface $S_g$.

  \begin{teo}\label{prJ}
  A homeomorphism $J\in Z(P)$, where $P\in\mathcal P$, is either   pseudo-Anosov or  periodic\footnote{A homeomorphism $f$ is called periodic if there exists $m\in\mathbb N$ such that $f^m=id$.}. \end{teo}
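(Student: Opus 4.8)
The plan is to use the invariant geometric data attached to the pseudo-Anosov map $P$. By Thurston's theory, $P$ carries a pair of transverse measured foliations $(\mathcal F^s,\mu^s)$ and $(\mathcal F^u,\mu^u)$ with a common finite set $\Sigma$ of prong singularities and a dilatation $\lambda>1$, such that $P$ scales the transverse measure $\mu^u$ by $\lambda$ and $\mu^s$ by $\lambda^{-1}$; moreover these are, up to a positive scalar on the transverse measure, the \emph{unique} $P$-invariant measured foliations. First I would exploit $PJ=JP$ to transport this data: the measured foliation $J(\mathcal F^s)$ is again $P$-invariant, and the relation $P\circ J=J\circ P$ forces $P$ to scale its transverse measure by $\lambda^{-1}$, i.e.\ $J(\mathcal F^s)$ is contracted by $P$. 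By the uniqueness statement this identifies $J(\mathcal F^s)=\mathcal F^s$ as a foliation, with $J$ multiplying $\mu^s$ by some constant $c^s>0$; the same reasoning on the expanded side gives $J(\mathcal F^u)=\mathcal F^u$ with $\mu^u$ scaled by $c^u>0$. (The contraction/expansion dichotomy of $\lambda$ automatically prevents $J$ from interchanging the two foliations.)

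The next step is to pin down $c^s,c^u$. The local product $\mu^s\otimes\mu^u$ is a finite $P$-invariant ``area'' measure $\nu$ on $S_g$ of total mass equal to the area of the flat structure, and $J$ multiplies $\nu$ by $c^sc^u$. Since $J$ is a homeomorphism of the compact surface $S_g$, it preserves the total mass, so $c^sc^u=1$. This reduces the problem to a clean dichotomy according to whether $c^u=1$ or $c^u\neq 1$.

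If $c^u\neq 1$, then, after possibly replacing $J$ by $J^{-1}$, we have $c^u>1$ and $c^s=c^u^{-1}<1$: the map $J$ preserves the very same transverse measured foliations $\mathcal F^u,\mathcal F^s$ (with the same singularity data inherited from $P$), expanding one and contracting the other by the factor $c^u>1$. This is exactly the definition of a pseudo-Anosov homeomorphism, so $J\in\mathcal P$. If instead $c^u=1$ (hence $c^s=1$), then $J$ preserves both $\mathcal F^s$ and $\mathcal F^u$ \emph{together with their transverse measures}; equivalently $J$ is an automorphism of the singular flat (half-translation) metric determined by $(\mu^s,\mu^u)$, i.e.\ an isometry of $(S_g,q)$.

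It remains to prove that such an isometry is periodic, and this is the step I expect to be the main obstacle, since it is the only place requiring genuine rigidity rather than bookkeeping. The claim is that the isometry group of a closed surface of genus $g>1$ equipped with this flat cone metric is finite. I would argue as follows: every isometry permutes the finite set $\Sigma$ of cone points and is determined by this permutation together with its derivative at one fixed cone point, which lies in the finite group of prong-preserving rotations; hence the group injects into a finite group. Alternatively, the isometry group is compact by Arzel\`a--Ascoli, and it is discrete because a flat surface of genus $>1$ admits no nontrivial one-parameter family of isometries (a Killing field would have to vanish on $\Sigma$ and force $\chi(S_g)=0$), so a compact discrete group is finite. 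Either way $J$ has finite order, i.e.\ there is $m\in\mathbb N$ with $J^m=\mathrm{id}$, so $J$ is periodic. Combining the two cases completes the proof.
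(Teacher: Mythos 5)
Your proposal is correct, and its first half coincides with the paper's argument: both establish $J(\mathcal F^s_P)=\mathcal F^s_P$, $J(\mathcal F^u_P)=\mathcal F^u_P$ (the paper via the dynamical characterization of stable/unstable manifolds, you via uniqueness of the $P$-invariant measured foliations -- either works), extract the scaling constants $\nu_s,\nu_u$ from unique ergodicity, deduce $\nu_s\nu_u=1$ from invariance of the total mass of $\mu_s\otimes\mu_u$ (the paper computes this via a Markov partition), and read off the pseudo-Anosov conclusion when $\nu_s\neq 1$. Where you genuinely diverge is the case $\nu_s=\nu_u=1$: you interpret $J$ as an isometry of the singular flat metric determined by $(\mu_s,\mu_u)$ and invoke finiteness of the isometry group of a flat cone surface of genus $g>1$ (rigidity of an isometry by its $1$-jet at a cone point, or Arzel\`a--Ascoli plus absence of Killing fields). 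The paper instead stays entirely inside its cited toolkit: a power $J^m$ fixes some separatrix $l$ of a singularity, the transverse measure $\mu_u$ forces $J^m$ to fix $l$ pointwise (since $\mu_u(J^m[s_i,x])=\mu_u([s_i,x])$ pins down $[s_i,x]$), and density of the leaf $l$ in $S_g$ gives $J^m=\mathrm{id}$. Your route is the classical one and is sound, but it imports flat-geometry rigidity that you would need to justify (in particular that an isometry fixing a point with identity derivative is the identity, and that $\Sigma\neq\emptyset$, which follows from the Euler--Poincar\'e formula for $g>1$); the paper's route buys a shorter, self-contained argument from unique ergodicity and density of leaves alone.
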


  Consider orientation-preserving homeomorphisms $P\in\mathcal P$ and $J\in Z(P)$ such that the map $J^lP^k$ is a pseudo-Anosov homeomorphism. Let us represent the manifold $M_J$ as the quotient space of the manifold $S_g\times\mathbb R$ by the action of the group $\Gamma=\{\gamma^i, i\in\mathbb Z\}$ of degrees of homeomorphism $\gamma\colon S_g\times\mathbb R \to S_g\times\mathbb R$, given by the formula $\gamma(z,r)=(J(z),r-1)$, with natural projection $p_{_{J}} \colon S_g\times \mathbb R \to M_{{J}}$.

Define the map $\bar\varphi_{P,J,n,k,l}\colon S_g\times\mathbb R \to S_g\times\mathbb R$ by the formula \begin{equation*} \bar\varphi_{P,J ,n,k,l}(z,r)=(P(z),\bar\varphi_{n,k,l}(r)). \end{equation*}

It is readily verified that $\bar\varphi_{P,J,n,k,l}\gamma=\gamma\bar\varphi_{P,J,n,k,l}$. Then the orientation-preserving homeomorphism $\varphi_{P,J,n,k,l}:M_J\to M_J$ is correctly defined (see Statement \ref{cyclic}) and given by the formula \begin{equation*} \varphi_{P, J,n,k,l}(w)=p_{_J}(\bar \varphi_{P,J,n,k,l}(p_{_J}^{-1}(w))), \end{equation*} where $w\in M_J$ and $p_{J}^{-1}(w)$ is the preimage of the point $w\in M_J$. We call homeomorphisms of the form $\varphi_{P,J,n,k,l}$ {\it model maps}. Denote by $\Phi$   the set of all model maps.

\begin{teo}\label{modelmaps}
 Any homeomorphism from the class $\Phi$ belongs to the class $\mathcal G$.
  \end{teo}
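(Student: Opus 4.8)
The plan is to exploit the fibered structure of a model map. First I would record that $M_J$, being the mapping torus of the orientation-preserving homeomorphism $J\colon S_g\to S_g$, is a closed orientable topological $3$-manifold, and that $\varphi=\varphi_{P,J,n,k,l}$ is a well-defined orientation-preserving homeomorphism, as already asserted in the statement. The crucial structural observation is that the assignment $(z,r)\mapsto r \bmod 1$ is $\gamma$-invariant and hence descends to a locally trivial fibration $\pi\colon M_J\to\mathbb S^1$ with fiber $S_g$, and that $\bar\varphi_{P,J,n,k,l}$ acts on the second coordinate by $\bar\varphi_{n,k,l}$ alone; consequently $\pi\circ\varphi=\varphi_{n,k,l}\circ\pi$, i.e.\ $\varphi$ is fiber-preserving over the circle diffeomorphism $\varphi_{n,k,l}$. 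In particular $\varphi(\pi^{-1}(A))=\pi^{-1}(\varphi_{n,k,l}(A))$ for every $A\subseteq\mathbb S^1$, a relation I will use repeatedly.

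Next I would describe $\varphi_{n,k,l}$ as a rough transformation of the circle (Statement \ref{cyclic}): its non-wandering set is a finite union of periodic orbits, all of period $k$, comprising $n$ attracting and $n$ repelling orbits. From fiber-preservation I would deduce $NW(\varphi)=\pi^{-1}(NW(\varphi_{n,k,l}))$. The inclusion $\subseteq$ is immediate: over a wandering base point one finds an arc $V$ with $\varphi_{n,k,l}^{\,j}(V)\cap V=\varnothing$ for all $j\ge 1$, whence $\pi^{-1}(V)$ witnesses wandering upstairs. For $\supseteq$ I would use that the first return map to a fiber over a periodic base point is pseudo-Anosov (computed below), and that a pseudo-Anosov homeomorphism has its whole surface as non-wandering set, so every point of such a fiber is non-wandering for the return map and hence for $\varphi$. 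Thus $NW(\varphi)$ is the disjoint union of the $2nk$ fibers sitting over the periodic base points; each is a connected component $B_i\cong S_g$, and $\varphi$ cyclically permutes the $k$ fibers over a single base orbit, so the minimal period is $k_i=k$.

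For the three defining conditions the key computation is the identification of the return map. Picking a periodic base point represented by $(z,r_0)$ with $\bar\varphi_{n,k,l}^{\,k}(r_0)=r_0+l$ (rotation number $l/k$), one gets $\bar\varphi_{P,J,n,k,l}^{\,k}(z,r_0)=(P^k(z),r_0+l)$, and applying $\gamma^{l}$, which does not change the point of $M_J$, yields the representative $(J^lP^k(z),r_0)$. Identifying $B_i$ with $S_g$ via the injective section $z\mapsto p_{_J}(z,r_0)$, this shows $\varphi^{k}|_{B_i}$ is topologically conjugate to $J^lP^k$, which is pseudo-Anosov by hypothesis and orientation-preserving; this is condition (2). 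Condition (1) follows from local triviality of $\pi$: over a short closed arc about the base point $\pi^{-1}$ is homeomorphic to $S_g\times[-1,1]$ with $B_i$ corresponding to $S_g\times\{0\}$, a cylindrical embedding of the genus-$g>1$ surface. Finally, for condition (3) I would transport the base dynamics: over an attracting (resp.\ repelling) fixed point $s$ of $\varphi_{n,k,l}^{\,k}$ choose $\epsilon>0$ with $\varphi_{n,k,l}^{\,k}([s-\epsilon,s+\epsilon])\subset(s-\epsilon,s+\epsilon)$ and set $U=\pi^{-1}([s-\epsilon,s+\epsilon])$; fiber-preservation gives $\varphi^{k}(U)\subset\mathrm{int}\,U$ and $\bigcap_{j\ge0}\varphi^{kj}(U)=\pi^{-1}(\{s\})=B_i$, so $B_i$ is an attractor (resp.\ repeller, by passing to $\varphi^{-1}$) for $\varphi^{k}$.

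I expect the main obstacle to be the careful bookkeeping of the quotient identifications in the return-map computation — in particular verifying that $z\mapsto p_{_J}(z,r_0)$ maps $S_g$ homeomorphically onto the fiber and that applying $\gamma^{l}$ produces exactly the factor $J^{l}$ — and, for the lower bound $NW(\varphi)\supseteq\pi^{-1}(NW(\varphi_{n,k,l}))$, invoking correctly that a pseudo-Anosov homeomorphism has its entire surface as non-wandering set, so that each whole fiber, and not merely a proper invariant subset of it, lies in $NW(\varphi)$.
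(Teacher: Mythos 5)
Your proposal is correct and follows essentially the same route as the paper: your fibration $\pi$ is exactly the paper's semiconjugacy $h_J$ onto the rough circle map $\varphi_{n,k,l}$, your return-map computation via $\gamma^l$ reproduces the paper's identification of $\varphi^k|_{\mathcal B_i}$ with $J^lP^k$, and your attractor and cylindrical-embedding arguments match the paper's. The only cosmetic difference is that for the inclusion $NW(\varphi)\supseteq\pi^{-1}(NW(\varphi_{n,k,l}))$ you invoke that a pseudo-Anosov map has the whole surface as its non-wandering set, while the paper cites the equivalent fact that its periodic points are dense.
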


   \begin{teo}\label{omega} Any homeomorphism from the class $\mathcal G$ is ambiently $\Omega$-conjugate\footnote{Recall that   homeomorphisms $f_1\colon X\to X$ and $f_2\colon Y\to Y$ of topological manifolds $X$ and $Y$ are called {\it ambiently $\Omega$-conjugated} if there is a homeomorphism $h\colon X\to Y$ such that $h(NW(f_1))=NW (f_2)$ and $hf_1|_{NW(f_1)}=f_2h|_{NW(f_1)}$.} to a homeomorphism from the class $\Phi$. \end{teo}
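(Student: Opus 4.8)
The plan is to recover, for an arbitrary $f\in\mathcal G$, a global surface-bundle structure on $M^3$ over the circle whose fibers are the invariant surfaces $B_i$, and then to read off from this structure both an ambient homeomorphism $h\colon M^3\to M_J$ and the conjugacy on $NW(f)$. First I would analyze the local picture near a single component: since $f^{k_i}|_{B_i}$ is conjugate to an orientation-preserving pseudo-Anosov homeomorphism and $B_i$ is a cylindrically embedded two-sided surface that is either an attractor or a repeller for $f^{k_i}$, the transverse behavior is a topological contraction (attractor) or expansion (repeller), while the tangential behavior is pseudo-Anosov. In particular each $B_i$ is incompressible, which excludes the topological pathologies that could otherwise occur in the wandering set.

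Next I would establish that attractors and repellers alternate and that the wandering set decomposes into product regions. On each of its two sides a two-sided surface $B_i$ faces a basin along which wandering trajectories run monotonically, without recurrence, from a repeller toward an attractor; using the neighborhoods $B_i\times[-1,1]$ supplied by the cylindrical embeddings, each pair of adjacent attractor and repeller bounds a wandering region swept out by these trajectories and homeomorphic to $S_g\times(0,1)$. Since $M^3$ is closed and the components are finite in number, the adjacency graph of the surfaces is a single cycle; gluing the product regions along the surfaces around this cycle exhibits $M^3$ as a locally trivial $S_g$-bundle over $\mathbb S^1$, hence as a mapping torus $M_J$ of its monodromy $J\colon S_g\to S_g$, with the $B_i$ sitting as fibers.

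Then I would match the dynamics. The induced action of $f$ on the base $\mathbb S^1$ is a rough (Morse--Smale) circle map whose attracting periodic points lie under the attractors and repelling ones under the repellers; the number of attractor orbits along the cycle fixes $n$, while the way $f$ cyclically permutes the components fixes $k$ and $l$, so that the base map is conjugate to some $\varphi_{n,k,l}$. Transporting a fiber once around the base and comparing with the tangential pseudo-Anosov dynamics shows that the monodromy $J$ commutes with the pseudo-Anosov $P$ realizing $f^{k}|_{B}$, i.e. $J\in Z(P)$, and that the first-return map to a non-wandering fiber is conjugate to $J^{l}P^{k}$; since this first return is conjugate to the pseudo-Anosov $f^{k_i}|_{B_i}$, the defining condition of $\Phi$ holds, and Theorem \ref{prJ} identifies $J$ itself as pseudo-Anosov or periodic. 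Thus the data $(P,J,n,k,l)$ define a model map $\varphi_{P,J,n,k,l}\in\Phi$, which lies in $\mathcal G$ by Theorem \ref{modelmaps}. Finally I would build $h$ as a fiber-preserving homeomorphism that on each $B_i$ realizes the conjugacy to $P$ given by condition $2$ (chosen consistently along each $f$-orbit, using that the tangential maps on an orbit are mutually conjugate) and that sends $B_i$ to the fiber of $M_J$ over the corresponding periodic point of $\varphi_{n,k,l}$; by construction $h(NW(f))=NW(\varphi_{P,J,n,k,l})$ and $hf=\varphi_{P,J,n,k,l}h$ on $NW(f)$.

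The main obstacle is the product structure of the wandering regions together with the identification of $M^3$ with a genuine mapping torus: proving that each basin between adjacent components is exactly $S_g\times(0,1)$, so that no extra topology is concealed in the wandering set, and that the monodromy is well defined, requires combining the no-recurrence of wandering trajectories with the incompressibility of the pseudo-Anosov-invariant surfaces. Some care is also needed to verify that all the $B_i$ share a common genus $g$ and that the gluings are consistent around the entire cycle, so that the resulting bundle is a single mapping torus rather than a more complicated assembly.
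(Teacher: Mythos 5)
Your overall route coincides with the paper's: decompose $M^3\setminus NW(f)$ into product regions, exhibit $M^3$ as a mapping torus $M_J$, read off a correct triple $n,k,l$ from the induced circle dynamics, and then conjugate fiberwise to the model. The first part (alternation of attractors and repellers, product structure of the basins, identification with $M_J$) is exactly what the paper imports from \cite{GrPoCh} and \cite{GrinesLevchenkoPochinka2014} in Lemma \ref{vsp}, and the extraction of $n,k,l$ via the rotation number of the induced circle map is Lemma \ref{liftH}. The genuine gap is at the step where you assert that ``transporting a fiber once around the base \dots\ shows that the monodromy $J$ commutes with the pseudo-Anosov $P$''. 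The monodromy of the bundle is defined only up to isotopy, and the pseudo-Anosov map attached to the tangential dynamics is determined only up to conjugacy, so an exact identity $JP=PJ$ does not follow from any soft transport argument; it is the technical core of the paper. Concretely: Lemma \ref{gomf02} combines Theorem \ref{prJ} with the rigidity statement that homotopic pseudo-Anosov maps are conjugate by a homeomorphism isotopic to the identity (Statement \ref{conjugacypA}) to show that the one-step tangential map $f_0$ is isotopic to a periodic or to a pseudo-Anosov map; Lemma \ref{conj} disposes of the periodic case by reversing the $\mathbb R$-coordinate (replacing $J$ by $J^{-1}$ and $l$ by $k-l$, after showing $k>1$ there); and Lemma \ref{exJ} replaces $J$ by $J'=J\xi$ with $\xi$ isotopic to the identity to force the exact commutation $J'P=PJ'$. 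None of this appears in your sketch, and without it the model map $\varphi_{P,J,n,k,l}$ is not even defined.

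Two further points. First, your conjugating map $h$ must be a homeomorphism of the whole manifold, so the fiberwise conjugacies realizing $f^{k}|_{B_i}\cong J^lP^k$ must be isotopic to the identity in order to be interpolated across the wandering product regions and to close up consistently after $k$ steps around the cycle; this again rests on Statement \ref{conjugacypA} and on the explicit isotopies $\xi_{i,j,t}$ of Steps 1--3 of the paper's final argument, whereas your phrase ``chosen consistently along each $f$-orbit'' hides precisely this verification. Second, there is a slip in your bookkeeping: the pseudo-Anosov map conjugate to the first-return map $f^{k}|_{B}$ is $J^lP^k$, not $P$ itself ($P$ is the pseudo-Anosov representative of the isotopy class of the one-step tangential map $f_0$), so the clause ``$J$ commutes with the pseudo-Anosov $P$ realizing $f^k|_B$'' is internally inconsistent with your subsequent identification of the first return with $J^lP^k$.
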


   \section{Main definitions and auxiliary statements}
 
  \subsection{Pseudo-Anosov homeomorphisms}
 
   Let $M^n$ be a topological manifold of dimension $n$.
  
   Family $\mathcal F=\{ L_{\alpha}; \alpha\in A\}$ of path-connected subsets in $M^n$ is called a {\it $k$-dimensional foliation} if it satisfies the following three conditions:
   \begin{itemize}
\item $L_{\alpha}\cap L_{\beta}=\emptyset$ for any $\alpha,\beta\in A$ such that $\alpha\neq\beta$;
\item $\underset{\alpha\in A}{\bigcup} L_{\alpha}=M^n$;
\item for any point $p\in M^n$ there is a local map $(U,\varphi)$, $p\in U$, so that if $U\cap L_{\alpha}\neq\emptyset $, $\alpha\in A$, then the path-connected components of the set $\varphi(U\cap L_{\alpha})$ have the form $\{(x_1,x_2,\dots,x_n)\in\varphi(U )$; $x_{k+1}=c_{k+1}, x_{k+2}=c_{k+2}, \dots, x_n=c_n\}$, where the numbers $c_{k+1}, c_{k+2},\dots,c_{n}$ are constant on the linearly connected components.
\end{itemize}

{\it A foliation $\mathcal F$ with a set of singularities $S$ of  $M^n$} is a family of path-connected subsets of  $M^n$ such that the family of sets $\mathcal F\setminus S $ is a foliation of  $ M^n\setminus F$.

  Let $q\in\mathbb N$. {\it The foliation $W_q$ on $\mathbb C$ with the standard saddle singularity at the point $O$ and $q$ separatrices} is a family of path-connected subsets in $\mathbb C$ such that $W_q\setminus O$ is a foliation on $\mathbb C\setminus O$ and   $Im\; z^{\frac{q}{2}}=const$ on leaves of $W_q\setminus O$. Rays $l_1,\dots,l_q\in W_q$ satisfying equality $Im\; z^{\frac{q}{2}}=0$ are called separatrices of the point $O$.

 \begin{figure}[!ht]
\includegraphics[scale=1.5]{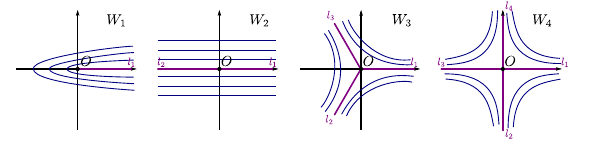}
\caption{The foliation $W_q$ on $\mathbb C$ with the standard
saddle singularity at the point $O$   and $q$ separatrices   for $q=1,2,3,4$.}
\label{saddles}
\end{figure}

   A one-dimensional foliation $\mathcal F$ on $M^2$ is called a {\it foliation with saddle singularities} if the set $S$ of singularities of the foliation $\mathcal F$ consists of a finite number of
points $s_1,\dots,s_c$ and for any point $s_i$ ($i\in\{1,\dots,c\}$) there is a neighborhood $U_{i}\subset M^2$,
a homeomorphism $\psi_{i}\colon U_{i}\to\mathbb C$ and a number $q_i\in\mathbb N$ such that $\psi_{i}(s_i)=O$
and $\psi_{i}(\mathcal F\cap U_{i})=W_{q_i}\setminus\{O\}$. The leaf containing the curve $\psi_i^{-1}(l_j)$, $j\in\{1,\dots,q_i\}$, is called the separatrix of the point $s_i$. The point $s_i$ is called {\it a saddle singularity
with $q_i$ separatrices}.

{\it The transversal measure $\mu$} for a foliation $\mathcal F$ with saddle singularities  on $M^2$ associates with each arc $\alpha$ transversal to $\mathcal F$ a non-negative Borel measure $\mu|_{ \alpha}$ with the following properties:
  \begin{enumerate}
  
   \item if $\beta$ is a subarc of the arc $\alpha$, then $\mu|_{\beta}$ is a restriction of the measure $\mu|_{\alpha}$;
  
   \item if $\alpha_0$ and $\alpha_1$ are two arcs transversal to $\mathcal F$ and connected by   a homotopy $\alpha\colon [0,1]\times [0,1]\to M^2 $ such that $\alpha( [0,1]\times\{0\})=\alpha_0$, $\alpha( [0,1]\times\{1\})=\alpha_1$ and $\alpha (\{t\}\times [0,1])$ for any $t\in [0,1]$ is contained in a leaf of $\mathcal F$ (see Fig. \ref{homotopic curves}), then $\mu|_{\alpha_0}=\mu|_{\alpha_1}$.
\end{enumerate}
\begin{figure}[!ht]
\includegraphics[scale=1.5]{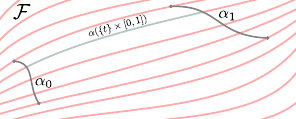}
\caption{Curves $\alpha_0$ and $\alpha_1$ are connected by homotopy $\alpha$.}
\label{homotopic curves}
\end{figure}

An orientation-preserving homeomorphism $P\colon S_g\to S_g$ of a closed orientable surface of genus $g>1$ is called a {\it pseudo-Anosov map ($pA$-homeomorphism)} with {\it dilatation} $\lambda>1$ if on surface $S_g$ there is a pair of $P$-invariant transversal foliations $\mathcal F^s_P$, $\mathcal F^u_P$ with a set of saddle singularities $S$ and transversal measures $ \mu_s$, $ \mu_u$ such that :
\begin{itemize}
\item each saddle singularity from $S$ has at least three separatrices;
\item $ \mu_s(P(\alpha))=\lambda \mu_s(\alpha)$ ($ \mu_u(P(\alpha))=\lambda^{-1} \mu_u(\alpha)$) for any arc $\alpha$ transversal to $\mathcal F^s_P$ ($\mathcal F^u_P$).
\end{itemize}

Let $P\colon S_g\to S_g$ be a pseudo-Anosov homeomorphism.
 Define the stable (unstable) manifold  $W^s(x)=\{y\in M^3:d(P^n(x),P^n(y))\to 0, n\to +\infty\}$  ($W^u(x)=\{y\in M^3:d(P^n(x),P^n(y))\to 0, n\to -\infty\}$)  of  $x\in S_g$, where $d$ is a metric on $S_g$. Note that the stable (unstable) manifold of the point $x\notin S$ is a leaf of the foliation $\mathcal F^s_P$ ($\mathcal F^u_P$) and  a stable (unstable) manifold of the point $x\in S$ is the union of a finite number of separatrices belonging to the foliation $\mathcal F^s_P$ ($\mathcal F^u_P$) and the point $x$.


{\it A rectangle} is a subset $\Pi\subset S_g$ that is the image of a continuous map $\upsilon$ of the square $[0,1]\times[0,1]$ into $S_g$ with the following properties: $\upsilon$ is one-to-one on the interior of the square and maps segments of its horizontal partition into arcs of leaves $\mathcal F^s_P$, and segments of its vertical partition into arcs of leaves $\mathcal F^u_P$. Denote by $\dot \Pi$  the image of the interior of the square. We will call the images of the horizontal and vertical sides   contracting and stretching sides of the rectangle $\Pi$.

{\it A Markov partition for a pseudo-Anosov homeomorphism $P$} is a finite family of rectangles $\tilde \Pi=\{\Pi_1,\dots,\Pi_n\}$ for which the following conditions are satisfied:
\begin{itemize}
\item $\underset{i}{\bigcup}\Pi_i=S_g$; $\dot \Pi_i\cap \dot\Pi_j=\emptyset$ for $i\neq j$;

\item let $\partial^s \tilde \Pi$ ($\partial^u \tilde \Pi$) be the union of all contracting (stretching) sides of rectangles $\Pi_1,\dots,\Pi_n$, then $P( \partial^s\tilde\Pi)\subset \partial^s\tilde\Pi$; $P(\partial^u\tilde\Pi)\supset \partial^u\tilde\Pi$.
\end{itemize}

\begin{stat}[\cite{Laudenbach}, Proposition 10.17]\label{Markov}
A pseudo-Anosov homeomorphism has a Markov partition.
  \end{stat}

  A foliation $\mathcal F$ is called {\it  uniquely ergodic} if there
exists a single $\mathcal F$-invariant measure up to multiplication by a scalar.

\begin{stat}[\cite{Laudenbach},  Theorem 12.1]\label{conjugacypA}
The foliations $\mathcal F^s_P$ and $\mathcal F^u_P$ of the pseudo-Anosov homeomorphism $P$ are uniquely ergodic.
\end{stat}

\begin{stat}[\cite{Laudenbach}, Theorem 12.5]\label{conjugacypA}
Two homotopic
pseudo-Anosov diffeomorphisms are conjugate by a diffeomorphism
isotopic to the identity.
\end{stat}

\begin{stat}[\cite{Zhirov}, Lemma 3.1]\label{pA2}
A homeomorphism that is topologically conjugate to a pseudo-Anosov homeomorphism is also pseudo-Anosov.
\end{stat}
 
\begin{stat}[\cite{Zhirov},  Theorem 3.2]\label{dense}
The set of periodic points of a pseudo-Anosov homeomorphism is dense everywhere on the surface.
 \end{stat}

  \begin{stat}[\cite{Zhirov}, Note 3.6]\label{denseleaf}
Every leaf of  foliations $\mathcal F^s_P$ and $\mathcal F^u_P$ of the pseudo-Anosov homeomorphism $P$ is everywhere dense on $S_g$.
 \end{stat}

  \subsection{Group action on a topological space}

Let us recall some facts related to the action of a group on a topological space (for more details, see \cite{GrPo}).

For a continuous mapping $h\colon X\to Y$ of a topological space $X$ into a topological space $Y$, denote by  $h^{-1}(V)$ the preimage of the set $V\subset Y$, that is, $h ^{-1}(V)=\{x\in X|h(x)\in V\}$.

Let the action of a group $G$ be free and discontinuous on a Hausdorff space $X$ and
let the orbits space $X/G$ be connected.   The definition of the projection $p_{ X/G}\colon X\to X/G$
implies that $p^{-1}_{X/G}(x)$ 
  is an orbit of some point $\bar x\in p^{-1}_{ X/G}(x)$. Let $c$ be a path in $X/G$ 
for which $c(0)=c(1)=x$. The monodromy theorem implies that there is the unique
path $\bar c$  in  $X$  starting from  $\bar x$ ($\bar c(0)=\bar x)$ which is a lift of the path$c$. Therefore, there
is an element  $g\in G$  for which $\bar c(1)=g(\bar x)$. Hence, the map $\eta_{X/G,\bar x}\colon \pi_1(X/ G,x)\to G$
defined by $\eta_{X/G,\bar x}([c])=g$ is well defined, i.e. it is independent of the choice of the
path in the class $[c]$.

\begin{stat}[\cite{GrPo},  Statement 10.32]\label{ep}  The map $\eta_{X/G,\bar x}\colon \pi_1(X/G,x)\to G $  is a nontrivial homomorphism.
It is called the homomorphism induced by the cover $p_{X/G}\colon X\to X/G$.
  \end{stat}

 Let $G$ be an abelian group and let $\bar c'$ be the lift of a path $c\in\pi_1(X/G,x)$ starting
from a point $\bar x'=\bar c'(0) $ distinct from the point $\bar x$ and let $g'(\bar x')=\bar c'(1)$. Since there
is the unique element $ g''\in G$  for which  $g''(\bar x)=\bar x'$ the monodromy theorem implies
 $g''(\bar c)=\bar c'$. Then $g''g=g'g''$ and, therefore, $g'=g$. Thus $\eta_{X/G,\bar x}=\eta_{X/G,\bar x'}$  and from
now on we omit the index $\bar x$ in the notation of the epimorphism $\eta_{X/G,\bar x}$ and we write
$\eta_{X/G}$ if $G$ is an abelian group.

\begin{stat}[\cite{GrPo}, Statement 10.35]\label{cyclic} Let cyclic groups $G$, $G'$  act freely and discontinuously on  $G$, $G'$-
space $X$ and let $g$, $g'$ be their respective generators. Then 
\begin{enumerate}
\item if $\bar h\colon X\to X$  is a homeomorphism for which  $\bar h(g(\bar x))=g'(\bar h(\bar x))$  for every $\bar x\in X$ then
the map $h\colon X/G\to X/{G'}$ defined by $h=p_{X/{G'}}(\bar h(p_{ X/G }^{-1}( x)))$ is a homeomorphism and
$\eta_{X/G}=\eta_{X/G'}h_*$;

\item if $h\colon X/G\to X/{G'}$ is a homeomorphism for which $\eta_{X/G}=\eta_{X/G'}h_*$ then there is
the unique homeomorphism $\bar h\colon X\to X$ which is a lift of $h$  and such that  $\bar h(g(\bar x))=g'(\bar h(\bar x))$, $\bar h(\bar x)=\bar x'$ for $\bar x\in X$ and $\bar x'\in p^{-1}_{ X/G'}( x')$, where $x'=h(p_{ X/G}(\bar x))$.
  \end{enumerate}
  \end{stat}

\section{On the centralizer of a pseudo-anosov map}\label{proofprJ}

In this section we prove that a homeomorphism $J\in Z(P)$, where $P\in\mathcal P$, is either a pseudo-Anosov homeomorphism or a periodic homeomorphism.
   
\begin{proof}
 Let $P\in\mathcal P$ and $J\in Z(P)$. Since $P=JPJ^{-1}$, it follows that  $J$ maps stable manifolds of   $P$ into stable ones, and unstable ones into unstable ones. Therefore, $J(\mathcal F^s_P)=\mathcal F^s_P$ and $J(\mathcal F^u_P)=\mathcal F^u_P$. The foliations $\mathcal F^s_P$, $\mathcal F^u_P$ have transversal measures $\mu_s$, $\mu_u$. Let us define for the foliation $\mathcal F^s_P$ ($\mathcal F^u_P$) a transversal measure $\tilde \mu_s(\alpha_s)=\mu_s(J(\alpha_s))$ ($\tilde \mu_u(\alpha_u )=\mu_u(J(\alpha_u))$), where $\alpha_s$ ($\alpha_u$) is the arc transversal to the foliation $\mathcal F^s_P$ ($\mathcal F^u_P$). Since  foliations $\mathcal F^s_P$, $\mathcal F^u_P$ are uniquely ergodic (Proposition \ref{conjugacypA}),  there exist numbers $\nu_s,\nu_u\in\mathbb R_+$ such that $ \tilde\mu_s=\nu_s\mu_s$ and $\tilde\mu_u=\nu_u\mu_u$. Thus, $\mu_s(J(\alpha_s))=\nu_s\mu_s(\alpha_s)$, $\mu_u(J(\alpha_u))=\nu_u\mu_u(\alpha_u)$ for arc $\alpha_s$ transversal to $\mathcal F^s_P$ and the arc $\alpha_u$ transversal to $\mathcal F^u_P$.

    Since the pseudo-Anosov homeomorphism $P$ has a Markov partition (see Statement \ref{Markov}) consisting of $n$ rectangles $\Pi_1,\dots,\Pi_n$, it follows that on each rectangle $\Pi_i$ ($i\in\{1,\dots,n\}$) the measure $\mu_s\otimes\mu_u$ is defined by the formula $\mu_s\otimes\mu_u(\Pi_i)=\mu_s(\alpha_{s,i})\mu_u( \alpha_{u,i})=\mu_i$, where $\alpha_{s,i}$ is the stretching side of the rectangle $\Pi_i$ and $\alpha_{u,i}$ is the contracting side. Since the foliations $\mathcal F^s_P$, $\mathcal F^u_P$ are invariant under $J$, it follows that the set $J( \Pi_i)$ ($i\in\{1,\dots,n\}$) is also a rectangle with measure $\mu_s\otimes\mu_u(J(\Pi_i))=\mu_s(J(\alpha_{s,i}))\mu_u(J(\alpha_{u,i}))=\nu_s\nu_u\mu_i$. Thus, $\mu_s\otimes\mu_u(S_g)=\mu_s\otimes\mu_u(\underset{i}{\bigcup}\Pi_i)=\underset{i}{\bigcup}\mu_i$ and $\mu_s \otimes\mu_u(J(S_g))=\mu_s\otimes\mu_u(\underset{i}{\bigcup}(J(\Pi_i)))=\nu_s\nu_u(\underset{i}{\bigcup} \mu_i)$. Since $J(S_g)=S_g$, it follows that $\nu_s\nu_u=1$. Let  $\nu=\nu_s$.

     Consider the case $\nu\neq1$. The homeomorphism $J$ has a pair of invariant transversal foliations $\mathcal F^s_P$, $\mathcal F^u_P$ with a common set of saddle singularities having at least three separatrices, and transversal measures $ \mu_s$, $ \mu_u$ such that that $ \mu_s(J(\alpha))=\nu \mu_s(\alpha)$ ($ \mu_u(J(\alpha))=\nu^{-1} \mu_u(\alpha)$) for any arc $\alpha$ transversal to $\mathcal F^s_P$ ($\mathcal F^u_P$). Consequently, for $\nu>1$ ($\nu<1$) the homeomorphism $J$ is a pseudo-Anosov map with dilatation $\nu>1$ ($\frac{1}{\nu}>1$).
   
  Consider the case $\nu=1$. Since the foliation $\mathcal F^s_P$ is invariant under $J$, it follows that separatrices of saddle singularities under the action of $J$ are mapped into separatrices of saddle singularities. Since the set of separatrices is finite, there exists $m\in\mathbb N$ such that $J^m(s_i)=s_i$ and $J^m(l)=l$ for some separatrix $l$ of the saddle singularity $s_i$ of the foliation $\mathcal F^s_P$.
 
Let us prove that $J^m(x)=x$ for any point $x\in l$. Let $[s_i,x]$ be the arc of the curve $l$ bounded by  points $s_i$ and $x$. Since $\mu_u(J^m[s_i,x])=\mu_u([s_i,x])$, it follows that $J^m([s_i,x])=[s_i,x]$. Therefore, $J^m(x)=x$.
 
  Since the leaf $l$ is dense everywhere on $S_g$ (see Statement \ref{denseleaf}) and $J^m|_l=id$, it follows that $J^m(z)=z$ for any $z\in S_g$.

  Consequently, the map  $J$ is  a periodic homeomorphism for $\nu=1$  and   is pseudo-Anosov for $\nu\neq 1$.
\end{proof}   
  
\section{On the model maps}\label{conjofpa}
  
 In this section we prove Theorem \ref{modelmaps} and  auxiliary lemmas.

Recall that a map  $f_2\colon Y\to Y$ of a topological space $Y$ is called a {\it factor} of a map  $f_1\colon X\to X$ of a topological space $X$ if there is a surjective continuous map  $h\colon X \to Y$ such that $hf_1=f_2h$. The map  $h$ is called {\it  semiconjugacy}.

\begin{lem}\label{NWsemiconjugacy}
Let $f_1\colon X\to X$, $f_2\colon Y\to Y$ be homeomorphisms of topological spaces $X$ and $Y$ such that $f_2$ is a factor of $f_1$ with  semiconjugacy $h\colon X \to Y$. Then: \begin{enumerate}
  
   \item $h(NW(f_1))\subset NW(f_2)$;

   \item if $f_2^k(V_y)=V_y$ for some $k\in\mathbb N$, $V_y\subset Y$, then $f_1^k(V_x)\subset V_x$;
  
   \item if $f_1^k(V_x)=V_x$ for some $k\in\mathbb N$, $V_x\subset X$, then $f_2^k(V_y)=V_y$, where $V_y=h(V_x )$.
     \end{enumerate}
\end{lem}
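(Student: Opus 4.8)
The plan is to base everything on the single algebraic fact that the semiconjugacy intertwines not only $f_1,f_2$ but all of their integer powers. From $hf_1=f_2h$ one gets $hf_1^{n}=f_2^{n}h$ for every $n\in\mathbb N$ by an immediate induction; since $f_1$ and $f_2$ are homeomorphisms, composing $hf_1=f_2h$ on the left by $f_2^{-1}$ and on the right by $f_1^{-1}$ yields $f_2^{-1}h=hf_1^{-1}$, so in fact $hf_1^{n}=f_2^{n}h$ holds for all $n\in\mathbb Z$. Together with the invertibility of $f_1,f_2$ this gives at once the two set identities
\begin{equation*}
f_1^{n}\bigl(h^{-1}(V)\bigr)=h^{-1}\bigl(f_2^{n}(V)\bigr),\qquad
h\bigl(f_1^{n}(W)\bigr)=f_2^{n}\bigl(h(W)\bigr),
\end{equation*}
valid for every $V\subset Y$, $W\subset X$ and every $n\in\mathbb Z$, which serve as the engine for all three assertions.

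For item (1) I would argue directly from the definition of the non-wandering set. Take $x\in NW(f_1)$ and an arbitrary open neighbourhood $V$ of $h(x)$. By continuity of $h$ the set $U:=h^{-1}(V)$ is an open neighbourhood of $x$, so there is $n\geq1$ with $f_1^{n}(U)\cap U\neq\emptyset$; pick a point $p$ in this intersection and write $p=f_1^{n}(u)$ with $u\in U$. Then $h(u)\in V$ and, by the intertwining relation, $f_2^{n}(h(u))=h(f_1^{n}(u))=h(p)\in V$, so that $f_2^{n}(h(u))\in f_2^{n}(V)\cap V$ and hence $f_2^{n}(V)\cap V\neq\emptyset$. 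As $V$ was arbitrary, $h(x)\in NW(f_2)$, proving $h(NW(f_1))\subset NW(f_2)$.

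Items (2) and (3) are then purely set-theoretic consequences of the two identities above. For (3), taking $V_y=h(V_x)$ and using the second identity with $n=k$, I get $f_2^{k}(V_y)=f_2^{k}(h(V_x))=h(f_1^{k}(V_x))=h(V_x)=V_y$, where the penultimate step uses the hypothesis $f_1^{k}(V_x)=V_x$. For (2) I read $V_x$ as $h^{-1}(V_y)$; the first identity with $n=k$ gives $f_1^{k}(V_x)=f_1^{k}\bigl(h^{-1}(V_y)\bigr)=h^{-1}\bigl(f_2^{k}(V_y)\bigr)=h^{-1}(V_y)=V_x$, so in fact equality holds and in particular the stated inclusion $f_1^{k}(V_x)\subset V_x$; note that the same computation under the weaker hypothesis $f_2^{k}(V_y)\subset V_y$ still yields $f_1^{k}(V_x)\subset V_x$, which is the form relevant to pulling back trapping neighbourhoods of attractors and repellers.

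I expect the only genuinely delicate point to be item (1): one must pull the neighbourhood back through $h^{-1}$, so as to invoke continuity of $h$ rather than any openness of $h$ (which need not hold), and then carefully track which power and which witnessing point certify non-wandering for $f_2$. Items (2) and (3) carry no real obstacle once the integer-power intertwining and the resulting preimage/image identities are in hand; the only thing worth flagging is that surjectivity of $h$ is not used in these set identities, entering only to make the phrase ``$f_2$ is a factor of $f_1$'' meaningful.
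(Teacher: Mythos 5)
Your proposal is correct and follows essentially the same route as the paper: the intertwining relation $hf_1^{n}=f_2^{n}h$, pulling the neighbourhood back through $h^{-1}$ to exploit continuity in item (1), and direct image/preimage computations for items (2) and (3). The only differences are cosmetic --- you certify non-wandering with a witnessing point where the paper tracks the intersection set, and in item (2) you obtain the equality $f_1^{k}\bigl(h^{-1}(V_y)\bigr)=h^{-1}(V_y)$ where the paper settles for the stated inclusion.
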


\begin{proof} Let $f_1\colon X\to X$, $f_2\colon Y\to Y$ be homeomorphisms of topological spaces $X$ and $Y$ such that $f_2$ is a factor of $f_1$ with semiconjugacy $h\colon X\to Y$, that is, $hf_1=f_2h$. Let us prove each point of the lemma separately.

\begin{enumerate}
\item Consider the point $x\in NW(f_1)$ and the point $y=h(x)$ with an arbitrary open neighborhood $U_y$. Let  $U_x=h^{-1}(U_y)$. Since $h$ is a continuous map, the inverse image $U_x$ of the open set $U_y$ is also open. Then, by the definition of a non-wandering point $x$, there exists $n\in\mathbb N$ such that $f_1^n(U_x)\cap U_x\neq\emptyset$. Let  $f_1^n(U_x)\cap U_x=\hat U_x$ and $\hat U_y=h(\hat U_x)$. Since $\hat U_x\subset U_x$, then $h(\hat U_x)\subset h(U_x)$, that is, $\hat U_y\subset U_y$. Note that $hf_1^n=f_2^nh$. Since $\hat U_y\subset h(f_1^n(U_x))$, then $\hat U_y\subset f_2^n(h(U_x))=f_2^n(U_y)$. Therefore, $f_2^n(U_y)\cap U_y\neq\emptyset$. Thus, $y=h(x)\in NW(f_2)$.

\item Let $f_2^k(V_y)=V_y$, where $k\in\mathbb N$, $V_y\subset Y$, $V_x=h^{-1}(V_y)$ and $f_1^k(V_x)=V_x'$. Then $f_2^k(h(V_x))=f_2^k(V_y)=V_y$ and $h(f_1^k(V_x))=h(V_x')$. Since $hf_1^k=f_2^kh$, it follows that $h(V_x')=V_y$. Therefore, $V_x'\subset V_x$, that is, $f_1^k(V_x)\subset V_x$.
 
\item Let $f_1^k(V_x)=V_x$, where $k\in\mathbb N$, $V_x\subset X$ and $V_y=h(V_x)$. Then $h(f_1^k(V_x))=h(V_x)=V_y$. Since $hf_1^k=f_2^kh$, then $f_2^k(h(V_x))=f_2^k(V_y)=V_y$. Therefore, $f^k(V_y)= V_y$.
 \end{enumerate} \end{proof}

We will  call a set of numbers $n,k,l$ {\it correct} if $n,k\in\mathbb N$, $l\in\mathbb Z$, where $l=0$ for $k=1$  and $l\in\{1,\dots,k-1\}$  is coprime to $k$ for $k>1$. Everywhere else in this section the set of numbers $n,k,l$ is correct. Let us recall main notation and formulas.

\begin{itemize}

\item
The manifold $M_J$ is the quotient space of   $S_g\times\mathbb R$ under the action of the group $\Gamma=\{\gamma^i, i\in\mathbb Z\}$ of degrees of homeomorphism $\gamma\colon S_g\times\mathbb R \to S_g\times\mathbb R$  given by the formula  $\gamma(z,r)=(J(z),r-1)$, where $J\colon S_g\to S_g$ is an orientation-preserving homeomorphism;

\item $p_{J} \colon S_g\times \mathbb R \to M_{{J}}$ is the natural projection inducing the homomorhisms $\eta_{M_J}\colon M_J\to\mathbb Z$;

\item $\bar\varphi_{n,k,l}\colon \mathbb R\to\mathbb R$ is the diffeomorphism given by the formula \begin{equation}\label{difline} \bar\varphi_{n,k, l}(r)=r+\frac{1}{4\pi nk} sin(2\pi nkr)+\frac{l}{k}; \end{equation}

\item $\mathbb S^1=\{e^{i2\pi\theta}|0\leq \theta <1\}$, $p\colon \mathbb R\to\mathbb S^1$ is the covering, given by the formula $p(r)=s$, where $s=e^{i2\pi r}$;

\item $\varphi_{n,k,l}\colon \mathbb S^1\to\mathbb S^1$is the diffeomorphism given by the formula \begin{equation}\label{difcircle} \varphi_{n,k, l}(s)=p(\bar\varphi_{n,k,l}(p^{-1}(s))); \end{equation}

  \item $\bar\varphi=\bar\varphi_{P,J,n,k,l}(z,r)\colon S_g\times\mathbb R\to S_g\times\mathbb R$ is the homeomorphism given by the formula \begin{equation}\label{bar} \bar\varphi(z,r)=(P(z),\bar\varphi_{n,k,l}(r)), \end{equation} where $ P\colon S_g\to S_g$ is an orientation-preserving pseudo-Anosov homeomorphism such that $J\in Z(P)$;

\item model homeomorphism $\varphi=\varphi_{P,J,n,k,l}\colon M_J\to M_J$ is given by the formula \begin{equation}\label{map} \varphi(w)=p_{_J} (\bar \varphi(p_{_J}^{-1}(w)));\end{equation}
  \item $\Phi$ is a set of model homeomorphisms.
\end{itemize}

Let us introduce the following notation:

\begin{itemize}

\item $\mathcal B_i=p_{_{J}}(S_g\times \{\frac{i}{2nk}\}) \in M_J$ ($i\in\{0,\dots,2nk-1 \}$);

\item $b_i=p(\frac{i}{2nk})\in\mathbb S^1$ ($i\in\{0,\dots,2nk-1\}$);

\item $ p_{J,r}\colon S_g\times\{r\}\to p_{_{J}}(S_g\times\{r\}) $ is the homeomorphism given by the formula \begin{equation} \label{pJr} p_{J,r}=p_J|_{S_g\times\{r\}},\; r \in\mathbb R; \end{equation}

\item $\rho\colon S_g\times\mathbb R\to S_g$ is the canonical projection given by the formula \begin{equation}\label{rho} \rho(z,r)=z; \end{equation}

\item $\rho_r\colon S_g\times\{r\}\to S_g$ is the homeomorphism given by the formula \begin{equation}\label{rhor} \rho_r=\rho|_{S_g\times\{r\}},\; r\in\mathbb R .\end{equation}

\end{itemize}

Note that the Eq. \eqref{map} is obtained from the relation \begin{equation}\label{varphi} p_{_{J}}\bar \varphi=\varphi p_{_{J}},\end{equation} and Eq. \eqref{difcircle} is obtained  from the relation \begin{equation}\label{varphinkl} p\bar \varphi_{n,k,l}=\varphi_{n,k,l} p.\end{equation} Since $ p_{_{J}} \colon S_g\times \mathbb R \to M_{{J}}$ is a natural projection, it follows that \begin{equation}\label{gamma} p_{_{J}}\gamma= p_{_{J}}.\end{equation}

Denote by $h_J\colon M_J\to \mathbb S^1$   the continuous surjective map  given by the formula \begin{equation}\label{hJ} h_J(w)=p(r), \text{\;where\;} w=p_J(z,r)\in M_J. \end{equation} It is readily verified that $h_J\varphi=\varphi_{n,k,l}p_J$. Thus, the following lemma is true.

\begin{lem}\label{semiconjugacy}
  The homeomorphism $\varphi_{n,k,l}\colon\mathbb S^1\to\mathbb S^1$ is the factor of the homeomorphism $\varphi\colon M_J\to M_J$ with semiconjugacy $h_J\colon M_J\to\mathbb S^1$.
\end{lem}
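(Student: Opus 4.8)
The plan is to verify, for the map $h_J$ defined in \eqref{hJ}, the three requirements in the definition of a factor: that $h_J$ is well defined, that it is continuous and surjective, and that it satisfies the intertwining relation $h_J\varphi=\varphi_{n,k,l}h_J$. Everything rests on the single observation that the covering $p\colon\mathbb R\to\mathbb S^1$ has period one, combined with the universal property of the quotient projection $p_J$.

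First I would settle well-definedness and continuity simultaneously, which is cleaner than treating them separately. Consider the continuous map $H\colon S_g\times\mathbb R\to\mathbb S^1$, $H(z,r)=p(r)$, obtained by composing the canonical projection onto the second factor with $p$. Since $p(r-1)=e^{i2\pi(r-1)}=e^{i2\pi r}=p(r)$, the map $H$ is constant on every $\Gamma$-orbit: indeed $H(\gamma(z,r))=H(J(z),r-1)=p(r-1)=p(r)=H(z,r)$, and hence $H\circ\gamma^i=H$ for all $i\in\mathbb Z$. Because $p_J$ is the quotient projection of the free and discontinuous $\Gamma$-action, the universal property of quotient maps yields a unique continuous map $h_J\colon M_J\to\mathbb S^1$ with $h_J\circ p_J=H$; by construction $h_J(p_J(z,r))=p(r)$, which is exactly \eqref{hJ}. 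This shows both that the formula \eqref{hJ} is independent of the chosen representative $(z,r)$ of $w$ and that $h_J$ is continuous. Surjectivity is then immediate: $p$ maps $\mathbb R$ onto $\mathbb S^1$, so the image of $H$ is all of $\mathbb S^1$, and since $h_J\circ p_J=H$ the map $h_J$ is onto.

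It remains to check the intertwining relation. For $w=p_J(z,r)$, relations \eqref{varphi} and \eqref{bar} give $\varphi(w)=p_J(\bar\varphi(z,r))=p_J(P(z),\bar\varphi_{n,k,l}(r))$, whence the defining formula for $h_J$ yields $h_J(\varphi(w))=p(\bar\varphi_{n,k,l}(r))$. On the other hand, $\varphi_{n,k,l}(h_J(w))=\varphi_{n,k,l}(p(r))=p(\bar\varphi_{n,k,l}(r))$ by \eqref{varphinkl}. Comparing the two expressions gives $h_J\varphi=\varphi_{n,k,l}h_J$, completing the verification that $\varphi_{n,k,l}$ is a factor of $\varphi$ with semiconjugacy $h_J$.

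The only genuinely delicate point is the well-definedness of $h_J$, that is, that $p(r)$ depends on the point $w\in M_J$ alone and not on its chosen preimage $(z,r)$; everything else is a direct computation. Packaging well-definedness together with continuity through the quotient universal property, rather than establishing the two by hand, is the route I expect to be both shortest and least error-prone, since it avoids any appeal to an explicit local chart structure on $M_J$.
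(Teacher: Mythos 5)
Your proposal is correct and follows the same route the paper takes implicitly: the paper merely defines $h_J$ by \eqref{hJ} and asserts that the intertwining relation ``is readily verified,'' and your argument is precisely that verification, with the well-definedness and continuity of $h_J$ (which the paper leaves tacit) cleanly handled via the $\Gamma$-invariance of $(z,r)\mapsto p(r)$ and the universal property of the quotient. Note in passing that the paper's displayed relation $h_J\varphi=\varphi_{n,k,l}p_J$ contains a typo for $h_J\varphi=\varphi_{n,k,l}h_J$, which is the identity you correctly prove.
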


It is directly verified (see Eqs. \eqref{difline} and \eqref{difcircle}) that the non-wandering set of the diffeomorphism $\varphi_{n,k,l}$ consists of $2nk$ points $b_0,\dots,b_{2nk- 1}$ of period $k$ such that points with odd indices $i$ are sinks  and points with even indices are source.

Let us prove Theorem \ref{modelmaps}, that is, we prove the inclusion  $\Phi\subset\mathcal G$.
 
  \begin{proof} Consider the model homeomorphism $\varphi=\varphi_{P,J,n,k,l}\colon M_J\to M_J$. Since the homeomorphism $J$ preserves orientation, it follows that the manifold $M_J$ is orientable. Preserving orientation of  homeomorphisms $P$ and $\varphi_{n,k,l}$ implies preserving orientation by  homeomorphism $\varphi$ inducing by map $\bar\varphi(z,r)=(P(z),\bar \varphi_{n,k ,l}(r))$.

  Let us prove that the connected component $\mathcal B_i$ ($i\in\{0,\dots,2nk-1\}$) is a cylindrical embedding of the surface $S_g$. For $i\in\{0,\dots,2nk-1\}$ we set $\bar U_i=S_g\times[\frac{i}{2nk}-\frac{i}{4nk},\frac{i }{2nk}+\frac{i}{4nk}]$ and $U_i=p_J(\bar U_i)$. Since $p_{_{J}} \colon S_g\times \mathbb R \to M_{{J}}$ is a covering, it follows that  for any $i\in\{0,\dots,2nk-1\} $ its restriction $p_{_{J}}|_{\bar U_i}\colon {\bar U_i}\to U_i$ is a homeomorphism. In addition,  $p_{_{J}}|_{\bar U_i}(S_g\times\{\frac{i}{2nk}\})=\mathcal B_i$. Therefore, $\mathcal B_i$ ($i\in\{0,\dots,2nk-1\}$) is a cylindrical embedding of $S_g$.

Let us prove that $\varphi^{k}(\mathcal B_i)=\mathcal B_i$, $\varphi^{\tilde k_i}( \mathcal B_i)\neq \mathcal B_i$ ($i\in\{0, \dots,2nk-1\}$) for any natural number $\tilde k_i< k$. In accordance with Lemma \ref{semiconjugacy}, the map $\varphi_{n,k,l}$ is the factor of a homeomorphism  $\varphi$ with semiconjugacy $h_J$. Note that $h_J^{-1}(b_i)=\mathcal B_i$ $(i\in\{0,\dots,2nk-1\})$, where $b_i\in\mathbb S^1$ is a point of period $k$. It follows from Lemma \ref{NWsemiconjugacy} that $\varphi^k(\mathcal B_i)\subset \mathcal B_i$. Since the map  $\varphi^k$ is a homeomorphism and the component $\mathcal B_i$ is homeomorphic to $S_g$, it follows that $\varphi^k(\mathcal B_i)=\mathcal B_i$. Suppose that $\varphi^{\tilde k}(\mathcal B_i)=\mathcal B_i$ for some natural number $\tilde k<k$. Then   Lemma \ref{NWsemiconjugacy} implies that $\varphi_{n,k,l}^{\tilde k}(b_i)=b_i$. We come to contradiction that point $b_i$ has period $k$.
  
  Let us prove that the map $\varphi^{k}|_{\mathcal B_i}$ $(i\in\{0,\dots,2nk-1\})$ is topologically conjugate to the orientation-preserving pseudo-Anosov homeomorphism. Since \begin{equation}\label{barJlPk} \gamma^l\Big (\bar \varphi^k\Big (z,\frac{i}{2nk}\Big)\Big)=\Big(J^ l\Big(P^k(z)\Big),\frac{i}{2nk}\Big),\end{equation} it follows that \begin{equation}\label{JlPk} \rho_{\frac{i} {2nk}}\Big(\gamma^l\Big(\bar\varphi^k\Big(\rho_{\frac{i}{2nk}}^{-1}(z)\Big)\Big)\Big)=J^l\Big(P^k(z)\Big). \end{equation} For any point $w\in\mathcal B_i$ we get $\varphi^k(w)\overset{\eqref{map}}{=}p_{_J}(\bar \varphi^k(p_ {_J}^{-1}(w)))\overset{\eqref{gamma}}{=}p_{_J}(\gamma^l(\bar \varphi^k(p_{_J}^{-1 }(w))))\overset{\eqref{barJlPk}}{=}\\ p_{J,\frac{i}{2nk}}(\gamma^l(\bar \varphi^k(p_{J,\frac{i}{2nk}}^{-1}(w))))\overset{\eqref{JlPk}}{=}
  p_{J,\frac{i}{2nk}}(\rho_{\frac {i}{2nk}}^{-1}(J^l(P^k(\rho_{\frac{i}{2nk}}(p_{J,\frac{i}{2nk}}^{- 1}(w))))))$. Consequently, the homeomorphism $\varphi^k|_{\mathcal B_i}$ is topologically conjugate to the orientation-preserving pseudo-Anosov homeomorphism $J^lP^k$ via the homeomorphism $p_{J,\frac{i}{2nk}}\rho_{\frac {i}{2nk}}^{-1}$.

      Lemmas \ref{NWsemiconjugacy} and  \ref{semiconjugacy} imply that $NW(\varphi)\subset(\mathcal B_0\cup\dots\cup\mathcal B_{2nk-1})$.

Since the set of periodic points of a pseudo-Anosov homeomorphism is dense everywhere on the surface (Proposition \ref{dense}) and $\varphi^{k}(\mathcal B_i)=\mathcal B_i$ $(i\in\{0,\dots,2nk -1\})$, it follows that $NW(\varphi)=\mathcal B_0\cup\dots\cup\mathcal B_{2nk-1}$.

Let us prove that the connected components $\mathcal B_i$ with odd indices $i$ belong to the set of attractors of the homeomorphism $\varphi$. Points $b_i$ with odd indices $i$ are sink points of the diffeomorphism $\varphi_{n,k,l}^k$. Therefore, 
  $\varphi^k(u_i)\subset int\; u_i$ and $\underset{j\geq 0}{\bigcap}\varphi_{n,k,l}^{jk}(u_i)=b_i$ for the neighborhood $u_i=h_J(U_i)=p([\frac{i}{2nk}-\frac{i}{4nk},\frac{i}{2nk}+\frac{i}{4nk} ])$ of point $b_i$ with odd index $i$. Since $h_J^{-1}(p[a,b])=p_J(S_g\times[a,b])$ for any $a,b\in\mathbb R$, $h_J\varphi^{jk }=\varphi_{n,k,l}^{jk}h_J$ and $h_J^{-1}(b_i)=\mathcal B_i$, it follows that $\varphi^k(U_i)\subset int\; U_i$, $\underset{j\geq 0}{\bigcap}\varphi^{jk}(U_i)=\mathcal B_i$. Consequently, connected components $\mathcal B_i$ with odd indices $i$ are attractors of the map $\varphi^k$.
 
Analogously one proves that connected components $\mathcal B_i$ with even indices $i$ belong to the set of repellers.

Thus $\varphi\in\mathcal G$. \end{proof}

   \section{The ambient $\Omega$-conjugacy of a homeomorphism $f\in\mathcal G$ to a model map}
 
Recall that the set $\Phi$ consists of model homeomorphisms of the form $\varphi_{P,J,n,k,l}$.
   This section contains a proof of $\Omega$-conjugacy of homeomorphisms of the class $\mathcal G$ with homeomorphisms of the set $\Phi$ and auxiliary lemmas. We will also use the notation introduced in the  Section  \ref{proofprJ} below.

   Let us denote by $\mathcal H$  the set of all homeomorphisms $f$ satisfying the following conditions:
   \begin{enumerate}
  
   \item there exists an orientation-preserving homeomorphism $J\colon S_g\to S_g$ such that $f\colon M_J\to M_J$;
  
\item $f$ preserves the orientation of $M_J$;

\item there exists $m\in\mathbb N$ such that the non-wandering set $NW(f)$ of the homeomorphism $f$ consists of $2m$ connected components $\mathcal B_{0}\cup\dots\cup\mathcal B_{ 2m-1}$;

\item for any $i\in\{0,\dots,2m-1\}$ there is a natural number $k_i$ such that $f^{k_i}(\mathcal B_i)= \mathcal B_i$, $f^ {\tilde k_i}(\mathcal B_i)\neq \mathcal B_i$ for any natural $\tilde k_i< k_i$ and the map  $f^{k_i}|_{\mathcal B_i}$ preserves the orientation of $\mathcal B_i$ ;

\item $f(\mathcal B_i)=\mathcal B_j$, where the numbers $i,j\in\{0,\dots,2m-1\}$ are either even or odd at the same time.

     \end{enumerate}

Note that homeomorphisms of the set $\Phi$ belong to the class $\mathcal H$.
 
For $m\in\mathbb N$ we denote by ${\mathcal T}_{m}$ the set ${\mathcal T}_{m}=\{\frac{i}{2m}, i\in\mathbb Z\}$. Then $p_J^{-1}(NW(f))=S_g\times{\mathcal T}_{m}$, where $f\in\mathcal H$.

  \begin{lem}\label{liftH} For any homeomorphism $f\in\mathcal H$ with non-wandering set consisting of $2m$ connected components, there exist and unique correct set of numbers $n,k,l$ and a lift $\bar f\colon S_g\times\mathbb R\to S_g\times\mathbb R$ such that $$\bar f(z,r)=\Big(f_{r}(z),r+\frac{l} {k}\Big),\;\forall r\in {\mathcal T}_{nk},$$ where $nk=m$ and $f_r\colon S_g\to S_g$ is an orientation-preserving homeomorphism given by $$f_r=\rho_{r+\frac{l}{k}}\bar f\rho_{r}^{-1}.$$

  \end{lem}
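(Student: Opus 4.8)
The plan is to lift $f$ to the cyclic cover $p_J\colon S_g\times\mathbb R\to M_J$ and then to read off the action of the lift on the distinguished slices $S_g\times\{r\}$, $r\in{\mathcal T}_m$. The crux, which I would establish first, is a geometric observation about how $f$ permutes the surface components. The components $\mathcal B_i=h_J^{-1}(b_i)$ are $2m$ disjoint separating surfaces that cut $M_J$ into $2m$ cyclically arranged ``slabs'' (the closures of the connected components of $M_J\setminus NW(f)$). Since $f$ is an orientation-preserving homeomorphism that permutes $NW(f)$, and hence permutes these slabs, it must preserve their cyclic order; collapsing each slab gives an orientation-preserving self-map of the base circle of degree $+1$. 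In particular $f$ induces a cyclic rotation $\mathcal B_i\mapsto\mathcal B_{(i+c)\bmod 2m}$ on the components and fixes the class determining the homomorphism $\eta_{M_J}$, i.e. $\eta_{M_J}=\eta_{M_J}f_*$.

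With the equality $\eta_{M_J}=\eta_{M_J}f_*$ in hand, Statement \ref{cyclic}(2), applied with $G=G'=\Gamma$ and $g=g'=\gamma$, produces a lift $\bar f\colon S_g\times\mathbb R\to S_g\times\mathbb R$ of $f$, unique up to composition with a power of $\gamma$, satisfying $\bar f\gamma=\gamma\bar f$. I would then analyse the action of $\bar f$ on $S_g\times{\mathcal T}_m$. Because $\bar f$ covers $f$ and $f(NW(f))=NW(f)$, the lift preserves $p_J^{-1}(NW(f))=S_g\times{\mathcal T}_m$; as $S_g$ is connected, the connected components of this set are exactly the slices $S_g\times\{r\}$, $r\in{\mathcal T}_m$, so $\bar f$ carries each slice homeomorphically onto another one. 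This yields an induced bijection $\hat f\colon{\mathcal T}_m\to{\mathcal T}_m$ which commutes with $r\mapsto r-1$ (since $\bar f\gamma=\gamma\bar f$ and $\gamma$ subtracts $1$ from $r$) and which is monotone increasing by the cyclic-order preservation established above.

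Identifying ${\mathcal T}_m=\tfrac1{2m}\mathbb Z$ order-isomorphically with $\mathbb Z$, an increasing bijection commuting with the unit shift is a translation, so $\hat f(r)=r+\tfrac{c}{2m}$ for a fixed $c\in\mathbb Z$; that is, $\bar f(z,r)=(f_r(z),\,r+\tfrac{c}{2m})$ for every $r\in{\mathcal T}_m$, with the shift independent of the slice. Since $f(\mathcal B_i)=\mathcal B_{(i+c)\bmod 2m}$, condition (5) forces $i$ and $i+c$ to have the same parity, so $c=2d$ is even and the shift equals $\tfrac dm$. Replacing $\bar f$ by $\gamma^{j}\bar f$ changes the shift by the integer $-j$, so exactly one power of $\gamma$ normalises it into $[0,1)$, say $\tfrac{d_0}{m}$ with $d_0\in\{0,\dots,m-1\}$; this pins down the lift uniquely.

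Writing $\tfrac{d_0}{m}=\tfrac lk$ in lowest terms and setting $n=\tfrac mk$ gives $nk=m$ and a correct triple $(n,k,l)$ (with $l=0$, $k=1$ when $d_0=0$, and $1\le l<k$ coprime to $k$ otherwise), unique by construction. Finally $f_r=\rho_{r+l/k}\,\bar f\,\rho_r^{-1}$ simply reads off the $S_g$-coordinate of $\bar f$ on each slice; it is a homeomorphism of $S_g$, orientation-preserving because $\bar f$ preserves orientation and, by condition (4), the return map $f^{k_i}|_{\mathcal B_i}$ preserves the orientation of $\mathcal B_i$. I expect the only genuine obstacle to be the first step: ruling out both an orientation-reversing lift (securing $\eta_{M_J}f_*=\eta_{M_J}$ rather than $=-\eta_{M_J}$) and an order-scrambling permutation of the components, so that the induced map on ${\mathcal T}_m$ is forced to be a single translation. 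Once this cyclic-order/orientation input is in place, the remaining slice bookkeeping is routine.
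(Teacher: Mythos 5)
Your overall strategy coincides with the paper's: establish $\eta_{M_J}=\eta_{M_J}f_*$, invoke Statement \ref{cyclic} to obtain an equivariant lift, observe that the lift permutes the slices $S_g\times\{r\}$, $r\in{\mathcal T}_{m}$, and deduce that the induced map on ${\mathcal T}_{m}$ is a translation, normalised into $[0,1)$ by composing with a power of $\gamma$ and then written as $l/k$ in lowest terms with $n=m/k$. The slice bookkeeping, the parity argument via condition 5, and the uniqueness of the normalised lift are all fine. However, the step you yourself single out as ``the only genuine obstacle'' is exactly the one carrying the content of the lemma, and the justification you offer for it is incorrect. You claim that because $f$ preserves the orientation of $M_J$ and permutes the slabs, the collapsed map on the base circle has degree $+1$. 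That implication fails: an orientation-preserving homeomorphism of a $3$-manifold fibred over the circle can reverse the cyclic order of the fibres provided it simultaneously reverses the orientation of the surface fibres (consider $(z,s)\mapsto(R(z),\bar s)$ on $S_g\times\mathbb S^1$ with $R$ orientation-reversing). So orientation-preservation of $M_J$ alone rules out neither $\eta_{M_J}f_*=-\eta_{M_J}$ nor an order-reversing permutation of the components $\mathcal B_i$.

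The missing ingredient is condition 4 of the class $\mathcal H$: the return maps $f^{k_i}|_{\mathcal B_i}$ preserve the orientation of $\mathcal B_i$. The paper's proof uses precisely this. If the induced circle map $\psi$ reversed orientation, it would fix some point $b_q$, hence $f(\mathcal B_q)=\mathcal B_q$, and the image curve $C=f(c)$ would cross $\mathcal B_q$ in the direction opposite to $c$; since $f$ preserves the orientation of $M_J$ \emph{and} of $\mathcal B_q$, it must preserve the transverse direction at $\mathcal B_q$, a contradiction (the case $m=1$ is handled the same way). You invoke condition 4 only much later, for the orientation of $f_r$, where it is in fact not needed: once the lift translates the $\mathbb R$-coordinate upward, preservation of the orientation of $S_g\times\mathbb R$ together with preservation of the orientation of $\mathbb R$ already forces $f_r$ to preserve the orientation of $S_g$. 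Repair the first step along the lines above and the rest of your argument goes through.
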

 
\begin{proof} Let $f\colon M_J\to M_J$ be a homeomorphism from the class $\mathcal H$.

   Let us prove that there is a lift $\bar f\colon S_g\times\mathbb R\to S_g\times\mathbb R$ of the homeomorphism $f$. By Statement \ref{cyclic} it sufficies to show that $\eta_{M_J}=\eta_{M_J}f_*$.

   Consider the loop $c\in M_J$ which is the projection of the curve $\bar c\in S_g\times\mathbb R$ ($p_J(\bar c)=c$), bounded by  points $\bar c(0)=( z,1)$, $\bar c(1)=\gamma(\bar c(0))=(J(z),0)$ and intersecting each set $S_g\times\{\frac{i}{ 2m}\}$, $i\in\{0,\dots,2m-1\}$ at exactly one point. By construction, the curve $c$ intersects each connected component $\mathcal B_{0},\dots,\mathcal B_{2m-1}$ at exactly one point and $\eta_{M_J}([c])=1$. We set $C=f(c)$ and $C(0)=f(c(0))$. Since $f$ is a homeomorphism such that $f(\mathcal B_i)=\mathcal B_{i'}$, $i,i'\in\{0,\dots,2m-1\}$, it follows that the curve $C=f(c)$ also intersects each component of $\mathcal B_{0},\dots,\mathcal B_{2m-1}$ at exactly one point. We set $\mathcal B_j=f(\mathcal B_{0})$. Choosing a point $\bar C(0)\in p_J^{-1}(C(0))$ such that $\bar C(0)\in S_g\times\{\frac{j}{2m}+ 1\}$ by the monodromy theorem there is a unique lift $\bar C$ of the path $C$ starting at the point $\bar C(0)$. Since the loop $C$ intersects each component $\mathcal B_{0},\dots,\mathcal B_{2m-1}$ at exactly one point, it follows that there are 2 cases: 1) $\bar C(1)= \gamma^{-1}(\bar C(0))$, 2) $\bar C(1)=\gamma(\bar C(0))$.

    Let us show that the case 1) is not realized.
  
    Consider the case $m=1$. Then $f(\mathcal B_0)=\mathcal B_0$. Since the homeomorphism $f$ preserves the orientation $M_J$ and the orientation $\mathcal B_0$, it follows that the curve $C(t)$ must be parameterized in one direction with  the parameterization  of the curve $c(t)$ with respect to the surface $\mathcal B_0$. Thus $\bar C(1)=\gamma(\bar C(0))$.

   Consider the case $m>1$. Let us denote by $\xi_c\colon \mathbb S^1\to c$, $\xi_C\colon \mathbb S^1\to C$ homeomorphisms such that $\xi_c(b_i)=\mathcal B_i\cap c$, $\xi_C(b_i)=\mathcal B_i\cap C$, where $i\in\{0,\dots,2m-1\}$. Define the homeomorphism $\psi\colon\mathbb S^1\to\mathbb S^1$ by the formula $\psi=\xi_C^{-1}f\xi_c$. Let us prove that the homeomorphism $\psi$ preserves orientation. Assume the converse. Let us prove that there exists $q\in\{0,\dots,2m-1\}$ such that $\psi(b_q)=b_q$. Let $\mathcal B_j=f(\mathcal B_0)$. Then $\psi(b_0)=b_j$. If $j=0$, then $q=0$. Let $j\neq 0$. By the condition of the class $\mathcal H$, the number $j$ is even. Since $\psi$ by assumption changes the orientation of $\mathbb S^1$ and the set $b_0\cup\dots\cup b_{2m-1}$ is invariant, it follows that the arc of the circle $(b_0,b_j)$ is mapped into itself and $\psi(b_i)=b_{j-i}$, $i\in\{0,\dots,\frac{j}{2}\} $. Thus $\psi(b_\frac{j}{2})=b_{\frac{j}{2}}$ and $q=\frac{j}{2}$. Therefore, $f(\mathcal B_q)=\mathcal B_q$. Since $\psi$ changes orientation, it follows that   the curve $C(t)$ is parameterized in the direction opposite to the parameterization of the curve $c(t)$ with respect to the surface $\mathcal B_q$ (see Fig. \ref{orientation}). Since the homeomorphism $f$ preserves the orientation $M_J$ and the orientation $\mathcal B_q$, then    the parameterization of the curve  $C(t)$ must be parameterized in one direction with the parameterization of the curve $c(t)$ with respect to the surface $\mathcal B_q$. We got a contradiction. Consequently, the homeomorphism $\psi$ preserves the orientation of $\mathbb S^1$. Then $\bar C(1)=\gamma(\bar C(0))$.

   \begin{figure}[!ht]
\includegraphics[scale=2]{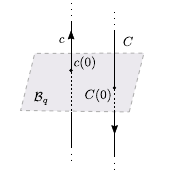}
\caption{Direction of increasing parameter $t\in[0,1]$ on   curves $c$ and $C$.}
\label{orientation}
\end{figure}

   Thus $\bar C(1)=\gamma(\bar C(0))$ and $\eta_{M_J}(f_*([c]))=1$. Consequently, $\eta_{M_J}=\eta_{M_J}f_*$ and there is a unique lift $\bar f\colon S_g\times\mathbb R\to S_g\times\mathbb R$ of the homeomorphism $f$ such that $ \bar f(\bar c(1))=\bar C(1)$ and \begin{equation}\label{fjjf}\bar f\gamma=\gamma\bar f.\end{equation}
  
   Let us  find the correct set of numbers $n,k,l$ for the homeomorphism $f$.
The case $m=1$ corresponds to the correct set of numbers $n=1$, $k=1$ and $l=0$. Consider  the case $m>1$.
  Since the homeomorphism $\psi$ is orientation preserving, it follows that  it has a rational rotation number $\frac{l}{k}$, where $k\in\mathbb N$, $l\in\{0,\dots,k-1\}$ and $(l,k)=1$ (see \cite[Theorem 4.1]{Sanhueza}). From \cite[Theorem 4.2]{Sanhueza} it follows that all periodic points of the homeomorphism $\psi$ have period $k$. Since point $b_i$ with even (odd) index $i$ is mapped to point $b_{i'}$ with even (odd) index $i'$, it follows that $2m$ points $b_0,\dots,b_{2m -1}$ are divided into 2  invariant sets of equal power, each of which consists of points of period $k$. Therefore, $m$  is divisible by $k$. We set $n=\frac{m}{k}$. Thus $n,k,l$ is the required correct set of numbers.
 
  Since the rotation number of $\psi$ is equal to $\frac{l}{k}$, it follows that  $\psi(b_0)=b_{2nl}$, that is, $f(\mathcal B_0)=\mathcal B_{2nl}$.

Let us find a formula that defines the map $\bar f$ for the point $(z,r)\in S_g\times\mathcal T_{nk}$. Since $\bar C(1)=\gamma(\bar C(0))$, it follows that $\bar C(1)\in S_g\times\{\frac{2nl}{2nk}\}=S_g\times\{\frac{l}{k}\}$. Invariance of the set $p_J^{-1}(NW(f))=S_g\times{\mathcal T}_{nk}$  under $\bar f$ implies that $\bar f(S_g\times[0, 1])=S_g\times[\frac{l}{k},1+\frac{l}{k}]$, where $\bar f(S_g\times\{0\})=S_g\times\{\frac{l}{k}\}$. From here we get that $\bar f(S_g\times\{\frac{i}{2nk}\})=S_g\times\{\frac{i}{2nk}+\frac{l}{k}\} $ for any $i\in\{0,\dots,2nk-1\}$. Using Eq. \eqref{fjjf} we obtain that $\bar f=\gamma^m\bar f \gamma^{-m}$ for any $m\in\mathbb Z$. Then  $\bar f(S_g\times\{r\})=\gamma^{[r]}(\bar f( \gamma^{-[r]}(S_g\times\{r\})))$, where $[r]$ is the integer part of the number $r\in\mathbb R$. Thus it is readily verified that $\bar f(S_g\times\{r\})=S_g\times\{r+\frac{l}{k}\}$ for $r\in {\mathcal T}_ {nk}$. Then for any $r\in {\mathcal T}_{nk}$ the homeomorphism $f_r\colon S_g\to S_g$ is correctly defined and given by the formula $f_r=\rho_{r+\frac{l}{k}}\bar f\rho_{r}^{-1}$. Thus $\bar f(z,r)=(f_r(z),r+\frac{l}{k})$ for any $r\in\mathcal T_{nk}$.

   It remains to prove that $f_r$ preserves the orientation of $S_g$, where $r\in\mathcal T_{nk}$. Preserving orientation of $M_J$ by $f$ implies preserving orientation of $S_g\times\mathbb R$ by its lift $\bar f$. Since $\bar f(S_g\times\{r\}=f_r(S_g)\times\{r+\frac{l}{k}\}$ for any $r\in\mathcal T_{nk}$, it follows that the homeomorphism $\bar f$ preserves the orientation of $\mathbb R$. Therefore, $\bar f$ preserves the orientation of $S_g$, that is, $f_r$ preserves the orientation of $S_g$. \end{proof}


Note that in the case $f=\varphi_{P,J,n,k,l}$ the equality $f_r(z)=P(z)$ holds for any $r\in{\mathcal T}_{nk} $ and $\bar f=\bar \varphi_{P,J,n,k,l}$.

        \begin{lem}\label{gomf0}

  Let $f\in\mathcal H$. Then $f_r$ is isotopic to $f_0$ for any $r\in{\mathcal T}_{nk}$.

   \end{lem}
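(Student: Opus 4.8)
The plan is to exploit the discreteness of $\mathcal T_{nk}$. Since isotopy of homeomorphisms is an equivalence relation and every $r\in\mathcal T_{nk}$ is joined to $0$ by finitely many elementary steps of the form $r_i=\frac{i}{2nk}\rightsquigarrow r_{i+1}=\frac{i+1}{2nk}$, it suffices to prove that $f_{r_i}$ is isotopic to $f_{r_{i+1}}$ for every $i\in\mathbb Z$; transitivity then gives $f_r$ isotopic to $f_0$ for all $r\in\mathcal T_{nk}$. I would reduce each elementary step to a statement about the single homeomorphism obtained by restricting the lift $\bar f$ to one slab.

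By Lemma \ref{liftH} the lift $\bar f$ satisfies $\bar f(S_g\times\{r_i\})=S_g\times\{r_i+\frac{l}{k}\}$ and $\bar f(S_g\times\{r_{i+1}\})=S_g\times\{r_{i+1}+\frac{l}{k}\}$. These two surfaces bound the compact slab $W_i=S_g\times[r_i,r_{i+1}]$, which is the unique complementary region of the pair with compact closure. As $\bar f$ is a homeomorphism of $S_g\times\mathbb R$, it carries the three complementary regions of $\{S_g\times\{r_i\},S_g\times\{r_{i+1}\}\}$ onto the three complementary regions of their images, preserving compactness; hence $\bar f(W_i)=W_i'=S_g\times[r_i+\frac{l}{k},r_{i+1}+\frac{l}{k}]$, with bottom boundary mapped to bottom and top to top. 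Rescaling the $\mathbb R$-coordinate identifies both $W_i$ and $W_i'$ with $S_g\times[0,1]$, turning $\bar f|_{W_i}$ into a homeomorphism $G\colon S_g\times[0,1]\to S_g\times[0,1]$ whose restrictions to $S_g\times\{0\}$ and $S_g\times\{1\}$ are, after the identifications by the projections $\rho$, exactly $f_{r_i}$ and $f_{r_{i+1}}$.

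Next I would produce a homotopy between these two boundary homeomorphisms by simply projecting. Writing $\pi\colon S_g\times[0,1]\to S_g$ for the canonical projection, the map $F\colon S_g\times[0,1]\to S_g$ defined by $F(z,t)=\pi(G(z,t))$ is continuous and satisfies $F(\cdot,0)=f_{r_i}$ and $F(\cdot,1)=f_{r_{i+1}}$. Thus $f_{r_i}$ and $f_{r_{i+1}}$ are homotopic as maps of $S_g$. Note that the intermediate maps $F(\cdot,t)$ need not be homeomorphisms, but this is irrelevant, since for a homotopy only continuity of $F$ is required.

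Finally I would upgrade homotopy to isotopy using the classical theorem that two homotopic homeomorphisms of a closed orientable surface of genus greater than $1$ are isotopic (Baer's theorem); note that $f_{r_i}$ and $f_{r_{i+1}}$ are orientation-preserving by Lemma \ref{liftH}. This yields that $f_{r_i}$ is isotopic to $f_{r_{i+1}}$, completing the induction. The one delicate point is precisely this last upgrade: because $\bar f$ fails to preserve the horizontal slices $S_g\times\{r\}$ for $r\notin\mathcal T_{nk}$, there is no obvious level-preserving isotopy and one cannot interpolate the $f_r$ through homeomorphisms by hand. The vertical projection furnishes a homotopy for free, and it is the surface-topological input ``homotopic $\Rightarrow$ isotopic'' that carries the real weight of the argument.
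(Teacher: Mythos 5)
Your proposal is correct and uses essentially the same two ingredients as the paper: projecting the lift $\bar f$ vertically onto $S_g$ to obtain a homotopy between the boundary homeomorphisms, and then invoking the classical fact that homotopic homeomorphisms of a closed surface of genus $g>1$ are isotopic. The paper just does this in one shot, defining $F_{r,t}(z)=\rho(\bar f(z,rt))$ directly as a homotopy from $f_0$ to $f_r$, so your stepwise decomposition through the slabs $S_g\times[r_i,r_{i+1}]$ is an unnecessary (but harmless) elaboration.
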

  
    \begin{proof} Let $f\in\mathcal H$. Let us prove that $f_r$ is isotopic to $f_0$ for any $r\in{\mathcal T}_{nk}$.

    Define a family of continuous maps $F_{r,t}\colon S_g\to S_g$ by the formula $F_{r,t}(z)=\rho(\bar f(z,r t))$, where $t\in[0,1]$, $r\in{\mathcal T}_{nk}$. Then $F_{r,t}$ defines a homotopy connecting the maps $F_{r,0}=f_0$ and $F_{r,1}=f_{r}$. Thus,  homeomorphisms $f_0$ and $f_{r}$ are homotopic. It follows from \cite[p. 5.15]{Chishang}  that they are isotopic  for any $r\in{\mathcal T}_{nk}$. \end{proof}

  \begin{lem}\label{vsp}

  Let $f\colon M^3\to M^3$ be a homeomorphism from the class $\mathcal G$. Then there exists a homeomorphism $f'\in\mathcal H$ is topologically conjugate to $f$.

   \end{lem}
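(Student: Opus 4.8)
The plan is to realise $M^3$ as a locally trivial fibre bundle over $\mathbb S^1$ with fibre $S_g$ in which the non-wandering surfaces appear as fibres, to identify the total space with a mapping torus $M_J$, and then to transport $f$ to a self-homeomorphism $f'$ of $M_J$ whose membership in $\mathcal H$ is verified one axiom at a time. Write $B_0,\dots,B_{N-1}$ for the components of $NW(f)$. First I would record two elementary facts. Since $f f^{k_i}f^{-1}=f^{k_i}$ on the $f$-orbit of $B_i$, the homeomorphism $f$ carries a trapping neighbourhood of an attractor component to a trapping neighbourhood of its image; hence $f$ permutes the attractor components among themselves and the repeller components among themselves, so the splitting $NW(f)=A\sqcup R$ into the union $A$ of attractor components and the union $R$ of repeller components is $f$-invariant. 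Moreover $f^{k_i}|_{B_i}$ is conjugate to an orientation-preserving pseudo-Anosov map, hence preserves the orientation of $B_i$.

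Next I would exploit the cylindrical embedding: each $B_i$ is two-sided, and being an attractor (respectively repeller) of $f^{k_i}$ it admits a trapping neighbourhood $\bar B_i\times[-1,1]$ with $B_i=\bar B_i\times\{0\}$ which $f^{k_i}$ contracts toward (respectively expands away from) $B_i$ from both sides. Consequently both sides of every surface lie in a basin, and each connected component $T$ of the wandering set $M^3\setminus NW(f)$, being connected, lies in the basin of a single attractor component and in the repelling basin of a single repeller component; thus $\overline T$ meets exactly two surfaces, one from $A$ and one from $R$.

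The central claim — and the step I expect to be the main obstacle — is that each such $T$ is a product, $\overline T\cong S_g\times[0,1]$ with the two boundary surfaces as $S_g\times\{0\}$ and $S_g\times\{1\}$. I would prove it as in \cite{GrinesLevchenkoMedvedevPochinka}: the interior of $T$ contains no non-wandering point, $f^{K}$ (for $K$ a common multiple of the $k_i$) acts on $T$ as a fixed-point-free translation from the repeller end to the attractor end, and matching the product collar inherited from the attractor boundary with the one inherited from the repeller boundary yields a continuous fibration of $\overline T$ by copies of $S_g$. Granting the claim, each two-sided surface is incident to exactly two tubes, so the incidence graph whose vertices are the surfaces and whose edges are the tubes has all vertices of degree two and is therefore a single cycle, with vertices alternating between $A$ and $R$; in particular $N$ is even, $N=2m$. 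Gluing the product tubes around this cycle exhibits $M^3$ as a fibre bundle over $\mathbb S^1$ with fibre $S_g$; since $M^3$ and $S_g$ are orientable the monodromy is an orientation-preserving homeomorphism $J\colon S_g\to S_g$, and there is a homeomorphism $h\colon M^3\to M_J$ sending $B_i$ onto $p_J(S_g\times\{i/(2m)\})$.

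Finally I would put $f'=hfh^{-1}$ and check the conditions defining $\mathcal H$. By construction $J$ is orientation-preserving and $f'\colon M_J\to M_J$ (Condition 1); since $f$ preserves the orientation of $M^3$, the conjugate $f'$ has degree $+1$ and preserves that of $M_J$ (Condition 2); $NW(f')=h(NW(f))$ consists of the $2m$ surfaces $\mathcal B_i=h(B_i)$, and after an ambient isotopy of $h$ along the fibres these are the standard fibres over $i/(2m)$ (Condition 3). For each $i$, the second condition defining $\mathcal G$ provides the minimal period $k_i$ with $(f')^{k_i}(\mathcal B_i)=\mathcal B_i$, and $(f')^{k_i}|_{\mathcal B_i}$ preserves the orientation of $\mathcal B_i$ because it is conjugate to an orientation-preserving pseudo-Anosov map (Condition 4). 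Labelling the repeller surfaces with even and the attractor surfaces with odd indices, the $f$-invariance of $A$ and $R$ established above shows that $f'$ sends even-indexed components to even-indexed ones and odd-indexed to odd-indexed (Condition 5). Therefore $f'\in\mathcal H$ is topologically conjugate to $f$, as required.
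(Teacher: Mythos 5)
Your proposal is correct and follows essentially the same route as the paper: decompose $M^3\setminus NW(f)$ into components each bounded by one attractor and one repeller surface and homeomorphic to $S_g\times[0,1]$, glue them around the resulting cycle into a mapping torus $M_J$, and conjugate $f$ to $f'=hfh^{-1}$, checking the five conditions of $\mathcal H$. The only difference is that the paper imports the two structural facts you single out as the main obstacles directly from \cite[Lemmas 2.1, 2.2]{GrPoCh} and \cite[Lemma 2]{GrinesLevchenkoPochinka2014} rather than sketching their proofs.
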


   \begin{proof}
  Let $f\colon M^3\to M^3$ be a homeomorphism from the class $\mathcal G$ with non-wandering set consisting of $q$ connected components $B_0,\dots, B_{q-1}$.

   In accordance with \cite[Lemma 2.1]{GrPoCh}, the set $M^3\setminus(B_0\cup\dots\cup B_{q-1})$ consists of $q$ connected components $V_0,\dots,V_{q -1}$, bounded by  one connected component of an attractor and one connected component of a repeller. Therefore, $q=2m$, where $m\in\mathbb N$. Without loss of generality, for $m>1$ we can assume that $cl\;V_i\cap cl\;V_{i-1}=B_{i-1}$, where $i\in\{1,\dots ,2m-2\}$ and $cl\;V_0\cap cl\;V_{2m-1}=B_{2m-1}$.

     In accordance with  \cite[Lemma 2.2]{GrPoCh}, each connected component $V_i$, $i\in\{0,\dots,2m-1\}$ of the set $M^3\setminus(B_0\cup\dots\cup B_{2m-1})$ is homeomorphic to $S_g\times[0,1]$. It follows from \cite[Lemma 2]{GrinesLevchenkoPochinka2014} that there exists a continuous surjective map $H\colon S_g\times[0,1]\to M^3$ (see Fig. \ref{components}) such that  maps $H|_{S_g\times\{\frac{i}{m}\}}\colon S_g\times\{\frac{i}{m}\}\to B_i$ ($i\in\{0 ,\dots,2m-1\}$), $H|_{S_g\times\{1\}}\colon S_g\times\{1\} \to B_{0}$ and $H|_{S_g \times(0,1)}\colon S_g\times(0,1) \to M^3\setminus B_{0}$ are homeomorphisms.
\begin{figure}[!ht]
\includegraphics[scale=1.5]{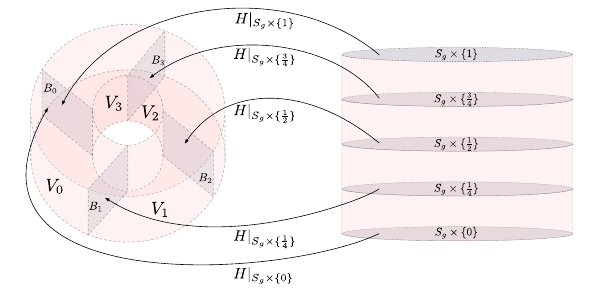}
\caption{Action of the homeomorphism $H$ in the case $m=2$.}
\label{components}
\end{figure}

   Let $J(z)=\rho_0((H|_{S_g\times\{0\}})^{-1}(H|_{S_g\times\{1\}}(\rho_1^{- 1}(z))))$ (see Fig. \ref{componentsJ}).
  
    \begin{figure}[!ht]
\includegraphics[scale=1.5]{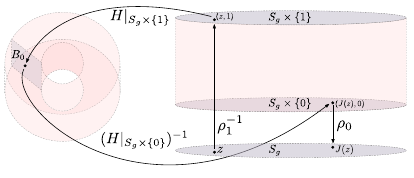}
\caption{Homeomorphism $J\colon S_g\to S_g$.}
\label{componentsJ}
\end{figure}

     Denote by $[r]$   the integer part of the number $r\in\mathbb R$. Define a continuous map $h\colon S_g\times\mathbb R\to M^3$ by the formula $h(z,r)=H(\gamma^{[r]}(z,r))$.
    
     Let  the homeomorphism $\xi\colon M^3\to M_J$ be given by the formula $\xi=p_{_{J}}(h^{-1}(w))$. Set $f'=\xi f \xi^{-1}$.

  Let us prove that the homeomorphism $f'$ satisfies all 5 conditions of the class $\mathcal H$. Since $M^3$ is orientable and homeomorphic to $M_J$, it follows that $J$ preserves the orientation of $S_g$ and condition 1 is satisfied. Since $f$ preserves the orientation of $M^3$, it follows that $f'$ preserves the orientation of $M_J $ and condition 2 is satisfied. Since $\xi(NW(f))=NW(f')$ and $h^{-1}(NW(f))=S_g\times{\mathcal T}_{nk }$, it follows that $NW(f')=p_{J}(h^{-1}(NW(f)))=p_J(S_g\times{\mathcal T}_{nk})=\mathcal B_0\cup\dots\cup\mathcal B_{2m-1}$. Therefore, condition 3 is satisfied. Since for any $B_i$ ($i\in\{0,\dots,2m-1\}$) there is a natural number $k_i$ such that $f^{k_i}( B_i) = B_i$, $f^{\tilde k_i}( B_i)\neq B_i$ for any natural $\tilde k_i< k_i$ and the map $f^{k_i}|_{ B_i}$ preserves the orientation of $B_i$, it follows that the same is true for the connected component $\mathcal B_i$ of the non-wandering set  $NW(f')$, that is, condition 4 is satisfied. The connected components of the non-wandering set $NW(f)$ of the homeomorphism $f$ are numbered in such a way that if $B_i$  is the connected component of an attractor of the homeomorphism $f$, then $B_{i+1 \pmod {2m}}$ is the connected component of a repeller of the homeomorphism $f$. Therefore, $f(B_i)=B_j$, where $i,j\in\{0,\dots,2m-1\}$ are either even or odd at the same time. Since $\xi(B_i)=\mathcal B_i$ ($i\in\{0,\dots,2m-1\}$), it follows that $f'(\mathcal B_i)=\mathcal B_j$, where $ i,j\in\{0,\dots,2m-1\}$ are simultaneously either even or odd, that is, condition 5 is satisfied. Thus, $f'\in\mathcal H$. \end{proof}

Everywhere below in this section we mean by $\bar f$, $f_r$ and $n,k,l$  the lift of the homeomorphism $f\in\mathcal H$, the homeomorphism $f_r\colon S_g\to S_g$, $r\in\mathcal T_ {nk}$, and the correct set of numbers $n,k,l$ from Lemma \ref{liftH}.

       \begin{lem}\label{gomf02}

  Let $f\in\mathcal H\cap\mathcal G$. Then $f_0$ is isotopic either to some periodic homeomorphism or to some pseudo-Anosov homeomorphism.

   \end{lem}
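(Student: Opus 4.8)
The plan is to identify the isotopy class of $f_0$ in the mapping class group $MCG(S_g)$ and then to exclude, using the Thurston--Nielsen trichotomy, the case in which this class is reducible but neither periodic nor pseudo-Anosov.

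First I would rewrite the return map of $f$ along a component of $NW(f)$ in terms of the fiber maps $f_r$, repeating the computation from the proof of Theorem \ref{modelmaps} with $f_r$ in place of $P$. Iterating $\bar f(z,r)=(f_r(z),r+\frac lk)$ starting on $S_g\times\{0\}$ gives $\bar f^{\,k}(z,0)=(g_0(z),l)$ with $g_0=f_{\frac{(k-1)l}{k}}\circ\cdots\circ f_{\frac lk}\circ f_0$, and since $\gamma^{l}(g_0(z),l)=(J^{l}(g_0(z)),0)$, relations \eqref{map} and \eqref{gamma} show that $f^{k}|_{\mathcal B_0}$ is topologically conjugate to $J^{l}g_0$ via $p_{J,0}\rho_0^{-1}$, exactly as $\varphi^{k}|_{\mathcal B_0}$ was conjugate to $J^{l}P^{k}$. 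Because $f\in\mathcal G$ and $\mathcal B_0$ is $f^{k}$-invariant, this restriction is conjugate to a pseudo-Anosov map (a nonzero power of a pseudo-Anosov map being again pseudo-Anosov), so by Statement \ref{pA2} the homeomorphism $J^{l}g_0$ is itself pseudo-Anosov.

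Next I would pass to $MCG(S_g)$. By Lemma \ref{gomf0} every $f_r$, $r\in\mathcal T_{nk}$, is isotopic to $f_0$; hence $[g_0]=[f_0]^{k}$ and $[J^{l}g_0]=[J]^{l}[f_0]^{k}$ is a pseudo-Anosov mapping class. I would also note that $[J]$ and $[f_0]$ commute: evaluating $\bar f\gamma=\gamma\bar f$ (see \eqref{fjjf}) on $S_g\times\{0\}$ gives $f_{-1}=Jf_0J^{-1}$, and $f_{-1}$ is isotopic to $f_0$ by Lemma \ref{gomf0}, so $[J][f_0][J]^{-1}=[f_0]$.

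Finally I would apply the Thurston--Nielsen classification. If $[f_0]$ were neither periodic nor pseudo-Anosov, it would be reducible with a nonempty canonical reduction system $\sigma$. Since the canonical reduction system is natural under conjugation and $[J]$ centralizes $[f_0]$, the class $[J]$ fixes $\sigma$; therefore $[J]^{l}[f_0]^{k}$ also fixes $\sigma$. But a pseudo-Anosov mapping class preserves no nonempty system of essential curves, contradicting $\sigma\neq\emptyset$. Hence $[f_0]$ is periodic or pseudo-Anosov, and by the realization part of the Thurston--Nielsen theorem $f_0$ is isotopic to a periodic, respectively pseudo-Anosov, homeomorphism. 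The main obstacle I expect is the first step --- carefully matching the restriction $f^{k}|_{\mathcal B_0}$ with the class $[J]^{l}[f_0]^{k}$ --- together with importing the canonical-reduction-system facts (naturality under conjugacy and emptiness for pseudo-Anosov classes), which go beyond the statements quoted in the paper.
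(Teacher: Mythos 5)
Your proposal is correct, but after the common first step it takes a genuinely different route from the paper. The opening computation --- conjugating $f^{k}|_{\mathcal B_0}$ to $\phi_0=J^{l}f_{\frac{(k-1)l}{k}}\cdots f_{\frac{l}{k}}f_0$ via $p_{J,0}\rho_0^{-1}$ and concluding from Statement \ref{pA2} that $\phi_0$ is pseudo-Anosov --- is exactly Eqs.~\eqref{phirSg}--\eqref{liftpA} of the paper. From there the paper stays at the level of homeomorphisms: it uses $Jf_r=f_{r-1}J$ to get $\phi_0=f_0^{-1}\phi_{\frac{l}{k}}f_0$, notes that $\phi_0$ and $\phi_{\frac{l}{k}}$ are isotopic pseudo-Anosov maps, applies Statement \ref{conjugacypA} to produce $h$ isotopic to the identity with $\phi_0=h\phi_{\frac{l}{k}}h^{-1}$, deduces $hf_0\in Z(\phi_0)$, and invokes the paper's own Theorem \ref{prJ} so that $hf_0$ --- hence, up to isotopy, $f_0$ --- is periodic or pseudo-Anosov. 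You instead pass to $MCG(S_g)$, record that $[J]$ centralizes $[f_0]$ and that $[J]^{l}[f_0]^{k}$ is a pseudo-Anosov class, and rule out the infinite-order reducible case with the canonical reduction system; your commutation identity $f_{-1}=Jf_0J^{-1}$ is the same relation \eqref{Jf0} the paper uses, just read in the mapping class group. Both arguments are sound. Yours is arguably more conceptual and avoids Statement \ref{conjugacypA} and Theorem \ref{prJ} entirely, but it imports the Birman--Lubotzky--McCarthy naturality and vanishing properties of canonical reduction systems and, for the periodic alternative, Nielsen realization of finite-order classes --- none of which is quoted in the paper. The paper's route is heavier notationally but self-contained relative to its stated toolkit, and it produces the periodic or pseudo-Anosov representative $hf_0$ explicitly rather than through a realization theorem.
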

  
    \begin{proof} Let $f\in\mathcal H\cap\mathcal G$.

        Let us prove that $f_0$ is isotopic either to some periodic homeomorphism or to some pseudo-Anosov homeomorphism.
       
          Since $\bar f$ is a lift of a homeomorphism $f$, it follows that \begin{equation}\label{lift1} p_J\bar f=fp_J.\end{equation}
      Therefore,\begin{equation}\label{formulaf} f(w)=p_J(\bar f( p_J^{-1}(w))). \end{equation}
       
       For $r\in {\mathcal T}_{nk}$  denote by $\phi_r\colon S_g\to S_g$ the homeomorphism given by the formula \begin{equation}\label{phirSg}\phi_r=J^{l}f_{r+\frac{(k-1)l}{k}} \cdots f_{r+\frac{l}{k}}f_r.\end{equation}
        Then it is readily verified that \begin{equation}\label{phirgamma} \gamma^{l}(\bar f^k|_{S_g\times{\mathcal T}_{nk}}(z,r))= (\phi_r(z),r),\;\text{ where}\; r\in {\mathcal T}_{nk}.\end{equation} Therefore,\begin{equation}\label{phir} \phi_r=\rho_r\gamma^l\bar f^k\rho_r^{-1}. \end{equation} Thus, $f^k|_{\mathcal B_0}(w)\overset{\eqref{formulaf}}{=}p_J(\bar f^k( p_J^{-1}(w )))\overset{\eqref{gamma}}{=}p_J(\gamma^l(\bar f^k( p_J^{-1}(w)))) \overset{\eqref{phirgamma}}{ =} p_{J,0}(\gamma^l(\bar f^k( p_{J,0}^{-1}(w))))\overset{\eqref{phir}}{=}\\p_ {J,0}(\rho_0^{-1}(\phi_0(\rho_0( p_{J,0}^{-1}(w)))))$, that is, \begin{equation}\label{liftpA} f^k|_{\mathcal B_0}=p_{J,0}\rho_0^{-1}\phi_0\rho_0p_{J,0}^{-1}.\end{equation} Therefore, the homeomorphism $ \phi_0$ is topologically conjugate to the homeomorphism $f^k|_{\mathcal B_0}$ via the map $p_{J,0}\rho_0^{-1}$. Since the homeomorphism $f^k|_{\mathcal B_0}$ is topologically conjugate to the pseudo-Anosov homeomorphism, it follows that the homeomorphism $\phi_0$ is also a pseudo-Anosov map (see Statement \ref{pA2}).

   Eq. \eqref{fjjf} implies that $(J(f_r(z)),r+\frac{l}{k}-1)=(f_{r-1}(J(z)),r-1 +\frac{l}{k})$ and \begin{equation}\label{Jf0} Jf_r=f_{r-1}J\;\text{for any}\; r\in{\mathcal T}_{nk}.\end{equation} Therefore, $f_0J^l=J^lf_l$. Then $f_0(J^{l}f_{\frac{(k-1)l}{k}} \cdots f_{\frac{l}{k}}f_0)=(J^{l}f_{l }f_{\frac{(k-1)l}{k}} \cdots f_{\frac{l}{k}})f_0$, that is, \begin{equation}\label{phif0} \phi_0=f_0 ^{-1}\phi_{\frac{l}{k}}f_0.\end{equation} It follows from  Eq. \eqref{phif0} and Statement \ref{pA2}  that $\phi_{\frac{l} {k}}$ is also a pseudo-Anosov homeomorphism.
       
        Since $f_{r}$ is isotopic to $f_0$ for any $r\in{\mathcal T}_{nk}$ by Lemma \ref{gomf0}, it follows that $J^{l}f_{\frac{( k-1)l}{k}} \cdots f_{\frac{l}{k}}f_0$ is isotopic to  $J^{l}f_{l}f_{\frac{(k-1)l}{k }} \cdots f_{\frac{l}{k}}$, that is, $\phi_0$ is isotopic to $\phi_{\frac{l}{k}}$. Then, according to Statement \ref{conjugacypA}, there exists an isotopic to the identity homeomorphism $h\colon S_g\to S_g$ such that \begin{equation}\label{phih} \phi_0=h\phi_{\frac{l}{k }}h^{-1}. \end{equation}  Putting Eq. \eqref{phih} in Eq. \eqref{phif0}, we obtain  that $\phi_0=f_0^{-1}(h^{-1}\phi_0 h)f_0$, that is, $(hf_0 )\phi_0=\phi_0 (h f_0)$.
       
        Since $\phi_0\in\mathcal P$ and $hf_0\in Z(\phi_0)$, it follows that   the homeomorphism $hf_0$ is either periodic or pseudo-Anosov by Theorem \ref{prJ}. Isotopicity to the identity of $h$ implies that $f_0$ is isotopic either to some periodic homeomorphism or to some pseudo-Anosov homeomorphism. \end{proof}

       \begin{lem}\label{conj}

  Let $f\in\mathcal H\cap\mathcal G$ and $f_0$ be isotopic to some periodic homeomorphism. Then there exists a homeomorphism $f'\in\mathcal H$ such that $f'$ is topologically conjugate to $f$ and $f'_0$ is isotopic to some pseudo-Anosov homeomorphism.

   \end{lem}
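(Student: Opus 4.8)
The plan is to first pin down the fact that the monodromy $J$ is itself forced to be pseudo-Anosov, and then to use this to move the hyperbolic behaviour out of the suspension direction and into the step map. Concretely, I would establish that $[J]$ is a pseudo-Anosov mapping class as follows. From $\bar f\gamma=\gamma\bar f$ one reads off $Jf_r=f_{r-1}J$, and since all $f_r$ are isotopic to $f_0$ (Lemma \ref{gomf0}), passing to the mapping class group gives $[J][f_0]=[f_0][J]$. Writing $\phi_0=J^lf_{(k-1)l/k}\cdots f_0$ as in Lemma \ref{gomf02}, we get $[\phi_0]=[J]^l[f_0]^k$, so $[J]$ commutes with the pseudo-Anosov class $[\phi_0]$. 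A class commuting with a pseudo-Anosov one is not reducible (a reducible class has a nonempty canonical reduction system which the commuting $[\phi_0]$ would have to preserve, impossible for a pseudo-Anosov map); by the Nielsen--Thurston classification, which is the mapping-class counterpart of Theorem \ref{prJ}, it is therefore pseudo-Anosov or periodic. If it were periodic then $\langle[J],[f_0]\rangle$ would be an abelian group generated by two finite-order elements, hence finite, forcing $[\phi_0]$ to have finite order, a contradiction. Thus $[J]$ is pseudo-Anosov, and after a harmless fibre-preserving conjugacy I may assume $J$ itself is pseudo-Anosov; this leaves the hypothesis on $f_0$ untouched.

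Next, recalling from the proof of Lemma \ref{gomf02} that $\theta:=hf_0$ is an honest periodic homeomorphism lying in $Z(\phi_0)$ with $h\simeq\mathrm{id}$ and $f_0=h^{-1}\theta$, I would exploit the commuting pair consisting of the pseudo-Anosov $J$ and the finite-order class $[\theta]=[f_0]$. The aim is to construct a topological conjugacy $g\colon M_J\to M_{J'}$ that re-presents $f$ so that its step map becomes pseudo-Anosov: intuitively one wants to spread the factor $J^l$ occurring in $\phi_0$ over the $k$ steps, so that each new step map $f'_r$ acquires a mapping class of the form (nontrivial power of $[J]$)$\cdot$(finite order), hence pseudo-Anosov, while the complementary finite-order part is reabsorbed into a new monodromy $J'$ that stays conjugate to $J$, keeping $M_{J'}\cong M_J$. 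Since $g$ is a conjugacy and the classes $\mathcal H$, $\mathcal G$ are conjugacy invariant, $f'=gfg^{-1}$ again belongs to $\mathcal H\cap\mathcal G$; as $g$ preserves the cyclic permutation of the components it keeps the same correct numbers $n,k,l$ (Lemma \ref{liftH}), and by Lemma \ref{gomf0} it suffices to arrange that $f'_0$ be pseudo-Anosov, a property stable under topological conjugacy by Statement \ref{pA2}.

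The hard part will be exactly this reassembly: producing the conjugacy $g$ explicitly and verifying that the resulting $f'$ is genuinely of class $\mathcal H$ with pseudo-Anosov step while the new monodromy remains conjugate to $J$ (so that the ambient manifold is unchanged, which is the real constraint, since a fibre-preserving conjugacy can only change $[f_0]$ within its conjugacy class and hence never turns a periodic step into a pseudo-Anosov one). I expect to realize the commuting classes $[J]$ and $[\theta]$ by genuinely commuting homeomorphisms that preserve the invariant foliations of $J$, to use Statement \ref{conjugacypA} to straighten the isotopies $f_r\simeq f_0$ simultaneously, and to use the unique ergodicity of the foliations $\mathcal F^s_J,\mathcal F^u_J$ to certify that the rebuilt step map multiplies the transverse measures by a constant factor, i.e. is pseudo-Anosov.
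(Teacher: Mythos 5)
Your algebraic preliminaries are sound and in fact isolate the same engine the paper runs on: from $Jf_r=f_{r-1}J$ and Lemma \ref{gomf0} one gets $[J][f_0]=[f_0][J]$, and since $[\phi_0]=[J]^l[f_0]^k$ is pseudo-Anosov while $[f_0]$ has finite order, $[J]$ cannot have finite order; hence $[J]$, and likewise $[J][f_0]$, is pseudo-Anosov (a commuting product of finite-order classes would make $[\phi_0]$ finite order). The paper uses exactly this observation, only in contrapositive form and via its own Theorem \ref{prJ} rather than the Nielsen--Thurston machinery you invoke. But the lemma is not an algebraic statement: it asks for an explicit homeomorphism $f'$ topologically conjugate to $f$ whose step map $f'_0$ is isotopic to a pseudo-Anosov map, and this is precisely the part you defer (``the hard part will be exactly this reassembly''). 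Your proposed route --- spreading the factor $J^l$ over the $k$ steps, reabsorbing finite-order pieces into a new monodromy, straightening isotopies with Statement \ref{conjugacypA} and certifying the result by unique ergodicity --- is never carried out, and as described it is underdetermined: $l/k$ is not an integer, so an ``even'' redistribution by powers of $J$ is not available, and you give no candidate for the conjugating homeomorphism $g$ nor a verification that the new monodromy keeps the ambient manifold. So there is a genuine gap at the heart of the proof.

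What you are missing is that a single, very simple conjugacy already does the job. The paper first notes $k>1$ (if $k=1$ then $l=0$ and $\phi_0=f_0$ would be conjugate to the pseudo-Anosov $f^k|_{\mathcal B_0}$, contradicting the hypothesis), and then conjugates by the flip $\bar h(z,r)=(z,-r)$, which satisfies $\bar h\gamma=(\gamma')^{-1}\bar h$ with $\gamma'(z,r)=(J^{-1}(z),r-1)$ and hence descends to a homeomorphism $M_J\to M_{J^{-1}}$ by Statement \ref{cyclic}. The resulting $f'=hfh^{-1}$ has lift $\gamma^{-1}\bar h\bar f\bar h^{-1}$, correct triple $(n,k,k-l)$, and step map $f'_r=Jf_r$; by your own (and the paper's) commutation argument $[J][f_0]$ cannot be of finite order, so by Lemma \ref{gomf02} it is pseudo-Anosov. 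Note also that your parenthetical claim that a fibre-preserving conjugacy ``can only change $[f_0]$ within its conjugacy class'' is exactly the obstruction the flip circumvents: reversing the $\mathbb R$-direction inserts one copy of the monodromy into each step, changing $[f_0]$ to $[J][f_0]$, which is not conjugate to $[f_0]$.
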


   \begin{proof}
  Let $f\colon M_J\to M_J$ be a homeomorphism from the class $\mathcal H\cap\mathcal G$ with non-wandering set consisting of $2nk$ connected components of period $k$, and $f_0$ is isotopic  to some periodic homeomorphism.

  Let us show that $k\neq 1$. Assume the converse. Then $l=0$ and the homeomorphism $\phi_0$ has the form $\phi_0=f_0$ (see Eq. \eqref{phirSg}). According to Eq. \eqref{liftpA}, the homeomorphism $\phi_0$ is topologically conjugate to the pseudo-Anosov homeomorphism $f^k|_{\mathcal B_0}$. We come to contradiction with the fact that $k=1$. Therefore, $k>1$.

  Define the homeomorphisms $\bar h, \gamma' \colon S_g\times\mathbb R\to S_g\times\mathbb R$ by the formulas $\bar h(z,r)=(z,-r)$, $\gamma' (z,r)=(J^{-1}(z),r-1)$. Recall that $\gamma(z,r)=(J(z),r-1)$. Since $(J(z),-(r-1))=(J(z),(-r)+1)$, it follows that $\bar h\gamma=(\gamma')^{-1} \bar h$. Therefore, the homeomorphism $\bar h$ projects into the homeomorphism $h\colon M_{J}\to M_{ J^{-1}}$ (see Statement \ref{cyclic}), given by the formula $h=p_{ J^ {-1}}(\bar h(p_{ J}^{-1}(w)))$, where $p_{J^{-1}}\colon S_g\times\mathbb R\to M_{ J ^{-1}}$ is a natural projection.
 
  Set $f'=hfh^{-1}$. Recall that for a homeomorphism $f\in\mathcal H$ there is  a unique lift $\bar f\colon S_g\times\mathbb R\to S_g\times\mathbb R$ such that $\bar f_{S_g\times { \mathcal T}_{nk}}(z,r)=(f_r(z),r+\frac{l}{k})$, where $n,k,l$ is the correct set of numbers. Consider the lift $\bar f'$ of the homeomorphism $f'$ given by the formula $\bar f'=\gamma^{-1}\bar h \bar f \bar h^{-1}$.
  Then for any $r\in{\mathcal T}_{nk}$ we have $\bar f'(z,r)=(J(f_r(z)),r+\frac{k-l}{k})$. Since $k\neq 1$, it follows that $l\in\{1,\dots,k-1\}$. Therefore, $(k-l)\in\{1,\dots,k-1\}$ and coprime to $k$. Thus, $n,k,(k-l)$ is the correct set of numbers and $f'_r=Jf_r$.
 
   Let us prove that the homeomorphism $f'_0$ is isotopic to some pseudo-Anosov homeomorphism. By Lemma \ref{gomf02} the homeomorphism $f'_0$ is isotopic either to some periodic map or to some pseudo-Anosov map. Suppose that the homeomorphism $f'_0=Jf_0$ is isotopic to a periodic homeomorphism. Then the homeomorphism $J=f'_0f_0^{-1}$ is also isotopic to a periodic homeomorphism. Since $J$ and $f_0$ are isotopic to periodic homeomorphisms and, according to Lemma \ref{gomf0}, $f_0$ is isotopic to $f_r$ for any $r\in{\mathcal T}_{nk}$, it follows tha the homeomorphism $\phi_0=J^{l}f_{\frac{(k-1)l}{k}} \cdots f_{\frac{l}{k}}f_0$ is also isotopic to periodic homeomorphism. We come to contradiction with the fact that $\phi_0$ is topologically conjugate to the pseudo-Anosov homeomorphism $f^k|_{\mathcal B_0}$ (see Eq. \eqref{liftpA}). Consequently, the homeomorphism $f'_0$ is isotopic to the pseudo-Anosov homeomorphism. Thus, $f'\in\mathcal H$ is topologically conjugate to $f$ and $f'_0$ is isotopic to some pseudo-Anosov homeomorphism. \end{proof}
  

     \begin{lem}\label{exJ}

Let $f\in \mathcal H\cap\mathcal G$ and $f_0$ be isotopic to some pseudo-Anosov homeomorphism $P$. Then there is a homeomorphism $f'\colon M_{J'}\to M_{J'}$ from the class $ \mathcal H$ such that $f'$ is topologically conjugate to $f$, $J'P=PJ'$ and $f_0'$ is isotopic to $P$.

   \end{lem}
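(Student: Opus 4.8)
The plan is to exploit the fact that, although $J$ need not commute with $P$ on the nose, it commutes with $P$ up to isotopy, and then to correct $J$ inside its isotopy class to an honest element of $Z(P)$. First I would record that $[J]$ commutes with $[P]$ in the mapping class group of $S_g$. Indeed, Eq.~\eqref{Jf0} gives $Jf_r=f_{r-1}J$ for $r\in\mathcal T_{nk}$, while Lemma~\ref{gomf0} shows that every $f_r$ is isotopic to $f_0$, hence to $P$. Passing to isotopy classes yields $[J][P]=[P][J]$, so $JPJ^{-1}$ is a pseudo-Anosov homeomorphism (Statement~\ref{pA2}) isotopic to $P$. By the conjugacy of homotopic pseudo-Anosov maps (Statement~\ref{conjugacypA}) there is a homeomorphism $h\colon S_g\to S_g$ isotopic to the identity with $JPJ^{-1}=hPh^{-1}$. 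Setting $J'=h^{-1}J$, a direct computation gives $J'PJ'^{-1}=h^{-1}(JPJ^{-1})h=P$, so $J'P=PJ'$, and $J'$ is orientation-preserving and isotopic to $J$.

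Next I would realize the required conjugacy $M_J\to M_{J'}$ by a fibre-preserving map. Since $h$ is isotopic to the identity, so is $J^{-1}hJ=(J')^{-1}J$; choose an isotopy $\{\xi_r\}_{r\in[0,1]}$ in $\mathrm{Homeo}(S_g)$ with $\xi_0=\mathrm{id}$ and $\xi_1=(J')^{-1}J$, and extend it to all $r\in\mathbb R$ by the rule $\xi_{r-1}=J'\xi_rJ^{-1}$. Define $\bar\Xi\colon S_g\times\mathbb R\to S_g\times\mathbb R$ by $\bar\Xi(z,r)=(\xi_r(z),r)$. The extension rule is exactly the relation $\bar\Xi\gamma=\gamma'\bar\Xi$, where $\gamma'(z,r)=(J'(z),r-1)$, so by Statement~\ref{cyclic} the map $\bar\Xi$ descends to a homeomorphism $\Xi\colon M_J\to M_{J'}$. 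Put $f'=\Xi f\Xi^{-1}$; using $\bar f\gamma=\gamma\bar f$ (Eq.~\eqref{fjjf}) one checks that $\bar f'=\bar\Xi\bar f\bar\Xi^{-1}$ commutes with $\gamma'$, hence is the lift of $f'$, and for $r\in\mathcal T_{nk}$ it has the form $\bar f'(z,r)=(\xi_{r+l/k}f_r\xi_r^{-1}(z),r+l/k)$, so that $f'_r=\xi_{r+l/k}f_r\xi_r^{-1}$ and the same correct triple $n,k,l$ is retained.

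Finally I would verify the three conclusions. Conjugacy of $f'$ to $f$ is immediate from the definition; since $\Xi$ is a homeomorphism and conjugation preserves orientation and the attractor--repeller structure of the non-wandering set, $f'\in\mathcal H$ (with ambient manifold $M_{J'}$ and $J'$ orientation-preserving). The relation $J'P=PJ'$ holds by the construction of $J'$. For the last assertion, $f'_0=\xi_{l/k}f_0\xi_0^{-1}=\xi_{l/k}f_0$, and since $\xi_{l/k}$ lies on the chosen isotopy starting at $\xi_0=\mathrm{id}$ it is isotopic to the identity; as $f_0$ is isotopic to $P$, so is $f'_0$. The main obstacle is the middle step: one must produce the conjugacy between the two distinct mapping tori $M_J$ and $M_{J'}$ compatibly with the fibrewise description of $f$, which forces the boundary condition $\xi_1=(J')^{-1}\xi_0J$ on the isotopy and makes the normalization $\xi_0=\mathrm{id}$ (guaranteeing $\xi_{l/k}\simeq\mathrm{id}$) essential for keeping $f'_0$ isotopic to $P$.
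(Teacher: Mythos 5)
Your proposal is correct and follows essentially the same route as the paper: both arguments first observe that $J$ conjugates $P$ to a pseudo-Anosov map isotopic to $P$, invoke Statement~\ref{conjugacypA} to produce a correcting homeomorphism isotopic to the identity, replace $J$ by an isotopic $J'$ commuting with $P$, and then build a fibre-preserving conjugacy $M_J\to M_{J'}$ from an isotopy extended equivariantly to $S_g\times\mathbb R$. The only (immaterial) difference is that the paper supports its correcting isotopy in a thin collar near $r=1$ so that $f_0'=f_0$ exactly, whereas you spread it over $[0,1]$ and obtain $f_0'=\xi_{l/k}f_0$, which is still isotopic to $P$ as required.
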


      \begin{proof}
Let $f\colon M_J\to M_J$ be a homeomorphism from the class $ \mathcal H\cap\mathcal G$ and $P$ be a pseudo-Anosov homeomorphism of the surface $S_g$, isotopic to $f_0$.
 
Let us construct a homeomorphism $J'\colon S_g\to S_g$. Set \begin{equation}\label{PJP} P'=J^{-1}PJ. \end{equation}
Denote by $F_{t}$  the isotopy connecting the homeomorphisms $F_{0}=f_0$ and $F_{1}=P$. Then the family of maps $J^{-1}F_tJ$ defines an isotopy connecting the maps $J^{-1}F_{0}J=J^{-1}f_0J=f_1$ and $J^{-1}F_{ 1}J=J^{-1}PJ=P'$. Since $f_0$ is isotopic to $f_1$ (see Lemma \ref{gomf0}) and  to $P$,  $f_1$ is isotopic to $P'$, it follows that $P$ is isotopic to $P'$.
Homeomorphism $P$ is topologically conjugate to the pseudo-Anosov homeomorphism $P'$, $P$ is isotopic to $P'$. Then by Statement \ref{conjugacypA} there exists an isotopic to the identity homeomorphism $\xi$ such that \begin{equation}\label{Peta}P'=\xi P\xi^{-1}. \end{equation} Set \begin{equation}\label{JJ} J'=J\xi, \; \gamma'=(J'(z),r-1).\end{equation}
    Note that $J'P\overset{\eqref{JJ}}{=}J\xi P\overset{\eqref{Peta}}{=}JP'\xi\overset{\eqref{PJP}}{= }PJ\xi\overset{\eqref{JJ}}{=}PJ'$.

  Let us construct a homeomorphism $Y\colon M_J\to M_{J'}$. Denote by $\xi_{t}$   the isotopy connecting the homeomorphism $\xi_{0}= \xi$ and the identity map $\xi_{1}=id$. Define the homeomorphism $y_r\colon S_g\to S_g$ by the formula \begin{equation*} y_r=\begin{cases}
               \xi_{6nk(1-r)} \;& \text{for}\;r\in[1-\frac{1}{6nk},1]; \\
           id \;& \text{for}\; r\in[0.1-\frac{1}{6nk}].
         \end{cases}\end{equation*} Define the homeomorphism $y\colon S_g\times[0,1]\to S_g\times[0,1]$ by the formula $y(z,r)=(y_r(z) ,r)$. Note that \begin{equation}\label{yy}y(z,0)=(z,0)\text{ and}\; y\Big(z,\frac{l}{k}\Big)=\Big(z,\frac{l}{k}\Big). \end{equation} Denote by  $[r]$   the integer part of the number $r\in\mathbb R$. Define the homeomorphism $\bar Y\colon S_g\times\mathbb R\to S_g\times\mathbb R$ by the formula \begin{equation}\label{Ygamma} \bar Y(z,r)=(\gamma')^{-[r]}(y(\gamma^{[r]}(z,r))).\end{equation}
             Since $\gamma' \bar Y=\bar Y\gamma$, it follows that  the homeomorphism $\bar Y$ projects into the homeomorphism $Y\colon M_{J}\to M_{ J'}$ (see Statement \ref{cyclic}), given by the formula $Y=p_{ J'}(\bar Y(p_{ J}^{-1}(w)))$, where $p_{ J}\colon S_g\times\mathbb R\to M_{J}$, $p_{J'}\colon S_g\times\mathbb R\to M_{J'}$ are natural projections.

              Set $f'=YfY^{-1}\colon M_{J'}\to M_{J'}$. By construction $f'\in\mathcal H$. Let us prove that $f'_0$ is isotopic to $P$. Consider the lift \begin{equation}\label{fY} \bar f'=\bar Y \bar f \bar Y^{-1} \end{equation} of the homeomorphism $f$. It is readily verified that $\bar f'(z,r)=(f'_r(z),r+\frac{l}{k})$, where $r\in\mathcal T_{nk}$ and $f '_r$ is a homeomorphism of $S_g$. Let us show that $f'_0=f_0$. Indeed, $\bar f'(z,0)\overset{\eqref{fY}}{=}\bar Y(\bar f(\bar Y^{-1}( z,0)))\overset{\eqref{Ygamma}}{=}\bar Y(\bar f(y^{-1}(z,0)))\overset{\eqref{yy}}{= }\bar Y(\bar f(z,0))=\bar Y(f_0(z),\frac{l}{k}) \overset{\eqref{Ygamma}}{=}
y_{\frac{l}{k}}(f_0(z),\frac{l}{k}) \overset{\eqref{yy}}{=}(f_0(z),\frac{l}{ k}) $. Thus, $f'_0$ is also isotopic to $P$. \end{proof}


      Let us prove that any homeomorphism from the class $\mathcal G$ is ambiently $\Omega$-conjugate to a homeomorphism from the class $\Phi$.

   \begin{proof}
   Let $f\in\mathcal G$.

According to Lemma \ref{vsp}, wihout loss of generality, we may assume that $f$ is defined on $M_{J}=S_g\times\mathbb R/{\Gamma}$ with natural projection $p_{J} \colon S_g\times \mathbb R \to M_{J}$, where $ J$ is a orientation-preserving homeomorphism of the surface $S_g$ and $\Gamma=\{\gamma^i| i\in\mathbb Z\}$ is a group of degrees of the homeomorphism $\gamma\colon S_g\times\mathbb R \to S_g\times\mathbb R$ given by the formula $\gamma(z,r)=(J(z ),r-1)$. It follows from Lemma \ref{liftH} that the non-wandering set of the homeomorphism $f$ consists of $2nk$ connected components $\mathcal B_0,\dots,\mathcal B_{2nk-1}$ and there is a lift $\bar f$ of the homeomorphism $f$ such that $\bar f(z,r )=(f_r(z),r+\frac{l}{k})$ for any $r\in{\mathcal T}_{nk}$, where $f_r\colon S_g\to S_g$ is an orientation preserving  homeomorphism of the surface and $n,k,l$ is the correct set of numbers.

According to Lemmas \ref{gomf0},\ref{gomf02},\ref{conj},\ref{exJ}, without loss of generality we may assume that $f_r$ is isotopic to some orientation-preserving pseudo-Anosov homeomorphism $P$ for any $r\in {\mathcal T}_{nk}$ and $J\in Z(P)$. Since $J$ preserves the orientation of $S_g$, it follows that the homeomorphism $J^lP^k$ also preserves the orientation of $S_g$.

Let us prove that the homeomorphism $J^{l}P^k$ is a pseudo-Anosov homeomorphism.
Using Eqs. \eqref{phirgamma} and \eqref{phir}, we obtain \begin{equation}\label{fk} f^k|_{p_J(S_g\times\{r\})}=p_{J,r }\rho_r^{-1}\phi_r\rho_rp_{J,r}^{-1},\; r\in {\mathcal T}_{nk},\end{equation}
that is, the homeomorphism $\phi_r$ ($r\in{\mathcal T}_{nk}$) is topologically conjugate to the pseudo-Anosov homeomorphism $ f^k|_{p_J(S_g\times\{r\})}$. Since by Lemma \ref{gomf0} the homeomorphism $f_r$ for any $r\in{\mathcal T}_{nk}$ is isotopic to $P$, it follows  that the homeomorphism $\phi_r=J^lf_{r+\frac{( k-1)l}{k}}\cdots f_{r+\frac{l}{k}}f_r$ is isotopic to $J^{l}P^k$, that is, the homeomorphism  $J^{l}P^k $ is isotopic to the pseudo-Anosov homeomorphism. According to Theorem \ref{prJ}, we obtain  
 that the homeomorphism $J^{l}P^k$ is a pseudo-Anosov map.

Note that  homeomorphisms $J^lP^k$ and $\phi_r$ are isotopic for any $r\in\mathcal T_{nk}$ and are pseudo-Anosov homeomorphisms. Then, according to Statement \ref{conjugacypA}, maps $\phi_r$ and $J^{l}P^k$ are topologically conjugate for any $r\in T$ via some isotopic to the identity homeomorphism. Denote such a homeomorphism by $h_r$. Then for any $r\in\mathcal T_{nk}$ we obtain that \begin{equation}\label{hi} J^{l}P^k=h_r(\phi_r)h_r^{-1}.\end{equation}

Thus, each homeomorphism $f\in\mathcal G$ corresponds to the correct set of numbers $n,k,l$ and orientation-preserving homeomorphisms $P\colon S_g\to S_g$, $J\colon S_g\to S_g$ such that the homeomorphisms $P$, $J^lP^k$ are pseudo-Anosov and $J\in Z(P)$. Therefore, there is correctly defined model map  $\varphi_{P,J,n,k,l}\in\Phi$.

Let us prove that the homeomorphism $f$ is ambiently $\Omega$-conjugate to $\varphi_{P,J,n,k,l}$. We construct a homeomorphism $ {f'} \colon M_J \to M_J$, topologically conjugate to $f$ and coinciding with the homeomorphism $\varphi_{P,J,n,k,l}$ on the non-wandering set (${f'}|_{NW({f'})}=\varphi_{P,J,n,k,l}|_{NW(\varphi_{P,J,n,k,l})}$).

We divide the construction   into steps.
   
  {\bf Step 1.} Construct a homeomorphism $x\colon S_g\times U \to S_g\times U $, where $U=\underset{j\in\{0,\dots,k-1\}}{\bigcup}U_j$, $U_j=[-\frac{1}{4nk}-j\frac{l}{k},\frac{1}{k}-\frac{1}{4nk}-j\frac{l}{k})$.

Let $T=\{0,\frac{1}{2nk},\dots,\frac{2n-1}{2nk}\}$. Note that $T=\mathcal T_{nk}\cap U_0$ and $r\in\mathcal T_{nk}\cap U_j$ has the form $r=i-j\frac{l}{k}$, where $j \in\{0,\dots,k-1\}$ and the number $i\in T$ is uniquely determined. For $i\in T$ and $j\in\{0,\dots,k-1\}$ we define the homeomorphism $\xi_{i,j}\colon S_g\to S_g$ by the formula \begin{equation}\label {xi}\xi_{i,j}=
               P^{-j}h_{i}\underbrace{f_{i-j\frac{l}{k}+(j-1)\frac{l}{k}}\cdots f_{i-j\frac{l}{k}}}_{j\;\text{maps}}.
        \end{equation} Since the homeomorphism $f_{i-j\frac{l}{k}+(j-1)\frac{l}{k}}\cdots f_{i-j\frac{l}{k}+\frac{l}{k}}f_{i-j\frac{l}{k}}$ is isotopic to $P^{j}$ for $j\in\{1,\dots,k-1\}$ and the homeomorphism $h_i$ is isotopic to the identity, it follows that the homeomorphism $\xi_{i,j}$ is isotopic to the identity for any $j\in\{0,\dots,k-1\}$. Let $\xi_{i,j,t}$ denote the isotopy connecting the homeomorphism $\xi_{i,j,0}= \xi_{i,j}$ and the identity map $\xi_{i,j,1}= id$.

    For $r\in U$ we define the homeomorphism $x_r\colon S_g\to S_g$ by the formula \begin{equation*}\label{xr} x_r=\begin{cases}
                 \xi_{i,j,6nk|r-(i-j\frac{l}{k})|} \;& \text{for}\; |r-(i-j\frac{l}{k})|\leq\frac{1}{6nk}; \\
           id \;& \text{for others}\; r\in U.
         \end{cases}\end{equation*} Define the homeomorphism $x\colon S_g\times U\to S_g\times U$ by the formula \begin{equation*} x(z,r)=(x_r(z),r) .\end{equation*} Note that \begin{equation}\label{xzr} x\Big(z,i-j\frac{l}{k}\Big)=\Big(\xi_{i,j}( z),i-j\frac{l}{k}\Big).\end{equation}

  {\bf Step 2.} Let us extend the homeomorphism $x\colon S_g\times U\to S_g\times U$ to the homeomorphism $\bar X\colon S_g\times\mathbb R\to S_g\times\mathbb R$.
 
  Let us prove that
for any point $r\in\mathbb R$ there is a unique integer $m\in\mathbb Z$ such that $(r-m)\in U$.

  Divide the half-interval $[-\frac{1}{4nk},1-\frac{1}{4nk})$ into $k$  half-intervals: $[-\frac{1}{4nk},1-\frac{ 1}{4nk})=[-\frac{1}{4nk},\frac{1}{k}-\frac{1}{4nk})\cup[-\frac{1}{4nk}+\frac{1}{k},\frac{2}{k}-\frac{1}{4nk}) \cup\dots\cup [-\frac{1}{4nk}+\frac{k-1} {k},1-\frac{1}{4nk})$. Obviously, for any $r\in\mathbb R$ there is a unique number $a\in\mathbb Z$ such that $r-a\in [-\frac{1}{4nk},1-\frac{1} {4nk})$. Let $r-a\in [-\frac{1}{4nk}+\frac{j}{k},\frac{j+1}{k}-\frac{1}{4nk}) $, where $j \in\{0,\dots,k-1\}$. Since $j$ runs through the complete system of residues $\{0,1,\dots,k-1\}$ modulo $k$ and $l$ is coprime with $k$, it follows that $(-jl)$ also runs through a complete system of residues $\{0,-l,\dots,-l(k-1)\}$ modulo $k$ \cite[page 46]{Vinogradov}. Consequently, there are integers $i\in\{0,-l,\dots,-l(k-1)\}$ and $b$ such that $j+bk=i$. Then $(r-a+b)\in [-\frac{1}{4nk}+\frac{j+bk}{k},\frac{j+1+bk}{k}-\frac{1 }{4nk})= [-\frac{1}{4nk}+\frac{i}{k},\frac{1}{k}+\frac{i}{k}-\frac{1}{ 4nk}) \subset U$. Thus, $m=a-b$ is the required integer such that $(r-m)\in U$.
  
Let $\varrho(r)$ denotes an integer $\varrho(r)\in\mathbb Z$ such that $(r-\varrho(r))\in U$. Define the map $\bar X\colon S_g\times\mathbb R\to S_g\times\mathbb R$ by  the formula $\bar X(z,r)=\gamma^{-\varrho(r)}(x(\gamma^{\varrho(r)}(z,r)))$ for $(z,r) \in S_g\times\mathbb R$. Then $\bar X\gamma=\gamma \bar X$.

  {\bf Step 3.} Construct a homeomorphism ${f'}\colon M_J\to M_J$.

  Let us set $\bar {f'}=\bar X\bar f \bar X^{-1}$. Since $\bar X\gamma=\gamma \bar X$ and $\bar f\gamma=\gamma\bar f$, it follows that $\bar f'\gamma=\gamma\bar f'$ and homeomorphisms $\bar X$ and $\bar f'$ project into homeomorphisms ${f'} \colon M_{J}\to M_{J}$, $X\colon M_{J}\to M_{ J}$ (see Statement \ref{cyclic}), given by the formulas ${f'}=p_{ J}(\bar {f'}(p_{ J}^{-1}(w)))$, $X=p_{ J }(\bar X(p_{ J}^{-1}(( w)))$ and ${f'}=XfX^{-1}$.

  Let us prove that $\bar {f'}|_{S_g\times{\mathcal T}_{nk}}=\bar \varphi_{P,J,n,k,l}|_{S_g\times{\mathcal T}_{nk}}$. Since $\bar X(S_g\times\{r\})=S_g\times\{r\}$ and $\bar f(S_g\times\{r\})=S_g\times\{r+\frac {l}{k}\}$ for any $r\in\mathcal T_{nk}$, it follows that $\bar {f'}(S_g\times\{r\})=\bar X(\bar f( \bar X^{-1}(S_g\times\{r\})))=S_g\times\{r+\frac{l}{k}\}$. Then for any $r\in\mathcal T_{nk}$ the homeomorphisms ${f'}_r\colon S_g\to S_g$, $X_r\colon S_g\to S_g$ are correctly defined  by   ${f'}_r= \rho_{r+\frac{l}{k}}\bar{f'}\rho_r^{-1}$, $X_r=\rho_{r+\frac{l}{k}}\bar X\rho_r^ {-1}$ and \begin{equation}\label{psir} {f'}_r=X_{r+\frac{l}{k}}f_rX_r^{-1}. \end{equation} Then \begin{equation}\label{Xr} X_r=J^{-m(r)}x_rJ^{m(r)}. \end{equation}

By construction, $\bar \varphi_{P,J,n,k,l}(z,r)=(P(z),r+\frac{l}{k})$ and $\bar{f'}( z,r)=({f'}_r(z),r+\frac{l}{k})$ for any $r\in{\mathcal T}_{nk}$.

Let us prove that ${f'}_{r}=P$ for any $r\in{\mathcal T}_{nk}$. Let us represent $r\in{\mathcal T}_{nk}$ in the form $r=i-j\frac{l}{k}+m$, where $i\in T$, $j\in\{0,\dots,k-1\}$ and $m\in\mathbb Z$.

  Let $k=1$. Then ${f'}_r={f'}_{i+m}\overset{\eqref{psir}}{=}X_{i+m}f_{i+m}X_{i+m}^{ -1}\overset{\eqref{Xr}}{=}J^{-m}x_{i}J^{m}f_{i+m}J^{-m}x_{i}^{-1 }J^{m}\overset{\eqref{xzr}}{=}\\J^{-m}\xi_{i,0}J^{m}f_{i+m}J^{-m}\xi_ {i,0}^{-1}J^{m} \overset{\eqref{Jf0}}{=}J^{-m}\xi_{i,0}f_{i}\xi_{i,0 }^{-1}J^{m} \overset{\eqref{xi}}{=}J^{-m}h_{i}f_{i}h_{i}^{-1}J^{m }\overset{\eqref{phirSg}}{=}J^{-m}h_{i}\phi_{i}h_{i}^{-1}J^{m}  \overset{\eqref{hi}}{=}\\J^{-m}PJ^m=P$.

Let $k>1$.
We consider the cases 1) $j\geq 1$ and 2) $j=0$ separately.

1) If $j\geq 1$, then $j-1\in\{0,\dots,k-2\}$ and the homeomorphism $\xi_{i,j-1}$ is correctly defined. We obtain that ${f'}_r={f'}_{i-j\frac{l}{k}+m}\overset{\eqref{psir}}{=}X_{i-(j-1)\frac{ l}{k}+m}f_{i-j\frac{l}{k}+m}X_{i-j\frac{l}{k}+m}^{-1}\overset{\eqref{Xr}} {=}J^{-m}x_{i-(j-1)\frac{l}{k}}J^{m}f_{i-j\frac{l}{k}+m}J^{- m}x_{i-j\frac{l}{k}}^{-1}J^{m} \\\overset{\eqref{xzr}}{=}J^{-m}\xi_{i,j-1 }J^{m}f_{i-j\frac{l}{k}+m}J^{-m}\xi_{i,j}^{-1}J^{m} \overset{\eqref{Jf0}}{=}J^{-m}\xi_{i,j-1}f_{i-j\frac{l}{k}}\xi_{i,j}^{-1}J^{m} \overset{\eqref{xi}}{=}\\J^{-m}P^{-j+1}h_if_{i-(j-1)\frac{l}{k}+(j-2)\frac {l}{k}} \dots f_{i-(j-1)\frac{l}{k}}f_{i-j\frac{l}{k}}f_{i-j\frac{l}{k} }^{-1}\dots f_{i-j\frac{l}{k}+(j-1)\frac{l}{k}}^{-1}h_i^{-1}P^jJ^{ m}=\\J^{-m}P^{-j+1}h_ih_i^{-1}P^jJ^{m}=P$.

2) If $j= 0$, then $r+\frac{l}{k}=i+\frac{l}{k}+m=i-(k-1)\frac{l}{k}+( m+l)$. We obtain that ${f'}_r={f'}_{i+m}\overset{\eqref{psir}}{=}X_{i-(k-1)\frac{l}{k}+(m +l)}f_{i+m}X_{i+m}^{-1}\overset{\eqref{Xr}}{=}J^{-m-l}\xi_{i,k-1}J^ {m+l}f_{i+m}J^{-m}\xi_{i,0}^{-1}J^{m} \overset{\eqref{Jf0}}{=}J^{- m-l}\xi_{i,k-1}J^{l}f_{i}\xi_{i,0}^{-1}J^{m} \overset{\eqref{xi}}{=}\\
J^{-m-l}P^{-k+1}h_if_{i-(k-1)\frac{l}{k}+(k-2)\frac{l}{k}} \dots f_{ i-(k-1)\frac{l}{k}}J^lf_{i}h_i^{-1}J^{m}\\ \overset{\eqref{Jf0}}{=}
J^{-m-l}P^{-k+1}h_iJ^lf_{i+(k-1)\frac{l}{k}} \dots f_{i-\frac{l}{k}}f_{ i}h_i^{-1}J^{m}\overset{\eqref{phirSg}}{=}
J^{-m-l}P^{-k+1}h_i\phi_i h_i^{-1}J^{m}\\\overset{\eqref{hi}}{=}
J^{-m-l}P^{-k+1}J^lP^kJ^{m}=P$.

We obtain that   ${\bar f'}(p_J^{-1}(NW(f'))=\bar\varphi_{P,J,n,k,l}(p_J^{-1}(\varphi_{P,J,n,k,l})$. 

Consequently, ${f'}|_{NW({f'})}=\varphi_{P,J,n,k,l}|_{NW(\varphi_{P,J,n,k,l})}$ and the homeomorphism $f$ is ambiently $\Omega$-conjugate to the homeomorphism $\varphi_{P, J,n,k,l}$ via the map  $X$. \end{proof}

\section*{FUNDING} The work is supported by the Russian Science Foundation under grant 22-11-00027 except
for the results of Section 3 which was supported by the Laboratory of Dynamical Systems and Applications NRU HSE, grant of the Ministry of science and higher education of the RF, ag. № 075-15-2022-1101.

\section*{CONFLICT OF INTEREST} 
The authors declare that they have no conflicts of interest.

\end{document}